\numberwithin{equation}{section}
\newtheorem{theorem}{Theorem}
\numberwithin{theorem}{section}
\theoremstyle{definition}
\newtheorem{definition}{Definition}[section]
\newtheorem{corollary}{Corollary}[section]
\newtheorem{proposition}{Proposition}[section]
\theoremstyle{remark}
\newtheorem{example}{Example}[section]
\title{Smarandache Ruled Surfaces According to Bishop Frame in $E^3$}
\author{S\"{u}leyman \uppercase{\c{S}enyurt}$^1$, \quad 	Davut CANLI$^{2*}$, \quad Kebire Hilal AYVACI$^3$ \\ \\
$^{1,2,3}$	Ordu University / Faculty of Arts and Science\\
	Department of Mathematics\\
	$^{*}: \textit{corresponding author}$
}
\date{}
\begin{document}


\maketitle

\begin{abstract}
This paper introduces new ruled surfaces according to Bishop frame by referring the main idea of Smarandache geometry.  The fundamental forms and the corresponding curvatures  are provided to put forth some characteristics of each surface. Finally, an example is given to illustrate the constructed surfaces.
\end{abstract}

\textbf{Keywords:} Smarandache ruled surfaces; Bishop frame; fundamental forms; mean and Gaussian curvatures; developable surfaces; minimal surfaces.

\section{Introduction}
\label{intro}
Ruled surfaces are special kind of surfaces that are easy to handle and have the potential use of related fields such as engineering, computational constructions, architectural structures, computer graphics, works of art, geometric designs, textile, automobile industry, etc. The basic theory related to ruled surfaces can be found in many differential geometry textbooks such as \cite{docarmo, gray, haci, struik}. Generalization of ruled surfaces was introduced by Juza in the 1960s \cite{5}. In addition, some characteristic properties of the ruled surface with Frenet frame of a non-cylindrical ruled surface were investigated in 2020 by Ouarab and Chahdi in \cite{8}. Apart from Frenet frame, in \cite{masals} and in \cite{tuncer}, Tun\c{c}er, (2015) and Masal and Azak, (2019), separately, studied some characteristics of the ruled surfaces according to Bishop frame (introduced by Bishop, 1975 in \cite{bishop}), whereas Ouarab et al. (2018) provided the main properties of ruled surfaces according to alternative frame in \cite{14}.\\
Recently, Ouarab, (2021a) put forth a method to generate new ruled surfaces by taking the advantage of the idea of Smarandache geometry \cite{ali,Melih}. By assigning the base curve as one of the Smarandache curves and taking the generator as the another vector element of Frenet frame, she introduced these ruled surfaces as Smarandache ruled surfaces according to Frenet frame in \cite{Ouarab1}. The same method of generating such ruled surfaces is applied to the Darboux frame by Ouarab, (2021b) in \cite{Ouarab3} and according to the alternative frame by Ouarab, (2021c) in \cite{Ouarab2}. There are other studies coined this practice. For example, in 2018 Yilmaz and \c{S}ahin introduced tangent and normal ruled surfaces with TN- Smarandache curve as a base and examined the geodesic conditions of these surfaces \cite{yilmazs3}. They also worked on geodesics of a binormal surface defined by Smarandache curve in \cite{yilmazs2}.\\
Motivated by these, in this study, we address new Smarandache ruled surfaces this time according to the Bishop frame. Then, we study some characteristics of these ruled surfaces and present an example to illustrate each surface.

\section{Preliminaries}\label{sec:2}
In this section, we recall some basic notions of which we refer through out the paper.\\
Let $\gamma= \gamma (s)$ be a regular \underline{unit speed} curve in $E^3$ and denote $\{T,\;N,\;B\}$ as the Frenet frame and $\{T,\;N_1,\;N_2\}$ as the Bishop frame of $\gamma$.
Then, the corresponding Frenet and Bishop formulae are given as 
\begin{equation}\label{derivatChanges}
\begin{array}{ll}
T'= \kappa N \qquad & \qquad T'= k_1 N_1+k_2 N_2 \\
N' =  - \kappa T + \tau B,\qquad & \qquad  {N_1}' =  - k_1 T,\\ 
B' =  - \tau N \qquad & \qquad {N_2}' =  - k_2 T
\end{array}
\end{equation}
where $'$ stands for the derivative with respect to the arc length parameter $s$. The relations among the components and the curvatures of two frames are given as:
\begin{equation}\label{transitions}
\begin{aligned}
T&=\gamma ' ,\\
N&=cos\theta N_1 +sin \theta N_2,\\
B&=-sin \theta N_1 + cos \theta  N_2,
\end{aligned}
\end{equation}
and
\begin{equation}\label{transCurvats}
	\begin{aligned}
		k_1 &= \kappa cos \theta, \qquad k_2 = \kappa sin \theta,\qquad \kappa = \sqrt{k_1 ^2 +k_2 ^2}\, ,\\
		\theta &=arctan \left(\frac{k_2}{k_1}\right), \qquad \tau=\theta ' .
	\end{aligned}
\end{equation}
On the other hand, a surface is said to be ruled if it is formed with a straight line $r(s)$ that moves along the curve $\gamma(s)$. The parametric representation for a ruled surface is given by the following:
\begin{equation}\label{genRuledSurf}
	\chi(s,v) = \gamma (s) + v X(s),
\end{equation}
where $\gamma(s)$ is the base curve, whereas $X(s)$ is the generator (ruling).
The unit normal vector field of $	\chi=	\chi(s,v)$ is computed as
\begin{equation}\label{normalOfSurf}
{n_{\chi}} = \frac{{{{\chi}_s} \times {{\chi}_v}}}{{\left\| {{{\chi}_s} \times {{\chi}_v}} \right\|}},
\end{equation}
where ${\chi}_s$ and ${\chi}_v$ are the partial derivatives of ${\chi}$ with respect to $s$ and $v$, respectively. The striction curve of the ruled surface $\chi$ is defined to be as
\begin{equation}\label{strict}
	{{\bar{\gamma}}} = \gamma-\frac{\langle \gamma ', X ' \rangle}{ \|X' \|^2}X.
\end{equation}
The normal curvature, geodesic curvature and the geodesic torsion of the surface $\chi(s,v)$ associated by its base curve $\gamma$ is given as:
\begin{equation}\label{curvatsTorsi}
\kappa _n =\langle \gamma'', {n_{\chi}} \rangle, \qquad \kappa_g=\langle {n_{\chi}} \times T, T' \rangle, \qquad \tau _g=\langle {n_{\chi}} \times {n_{\chi}} ', T' \rangle,
\end{equation}
respectively. For these invariants, the following propositions exist:
\begin{proposition}\label{propsIlk} (see \cite{docarmo,gray,haci}.)
	\begin{itemize}
	\item The curve $\gamma$ is said to be an asymptotic line of the surface $\chi$ if the normal curvature $\kappa _n$ vanishes,
	\item The curve $\gamma$ is said to be geodesic curvature if the geodesic curvature $\kappa _g$ vanishes,
	\item The curve $\gamma$ is said to be a principal line of the surface $\chi$ if the geodesic torsion $\tau _g$ vanishes.
	\end{itemize}
\end{proposition}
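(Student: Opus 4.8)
The plan is to prove all three characterizations at once by introducing the Darboux frame adapted to the surface along $\gamma$ and reading the three invariants off its structure equations. Set $U = n_{\chi} \times T$, so that $\{T,\,U,\,n_{\chi}\}$ is a positively oriented orthonormal frame along the unit-speed curve $\gamma$ (note $T\times U = n_{\chi}$). Differentiating the orthonormality relations and using that each derivative is expressible in this frame gives the Darboux structure equations
\begin{equation}\label{darbouxEqs}
\begin{aligned}
T' &= \kappa_g\, U + \kappa_n\, n_{\chi},\\
U' &= -\kappa_g\, T + \tau_g\, n_{\chi},\\
n_{\chi}' &= -\kappa_n\, T - \tau_g\, U,
\end{aligned}
\end{equation}
whose off-diagonal coefficients are exactly the normal curvature, geodesic curvature and geodesic torsion of (\ref{curvatsTorsi}). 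Taking the relevant inner products recovers $\kappa_n = \langle \gamma'', n_{\chi}\rangle$ and $\kappa_g = \langle n_{\chi}\times T, T'\rangle$ immediately, while the third equation exhibits $\tau_g$ as the coefficient governing the rotation of the surface normal about the tangent line.

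First I would treat the asymptotic condition. Since $\gamma'' = T'$, its normal component is $\langle \gamma'', n_{\chi}\rangle = \kappa_n$, which equals the value $II(T,T)$ of the second fundamental form in the tangent direction. By definition a curve is an asymptotic line when its tangent is an asymptotic direction, i.e. $II(T,T)=0$, so $\gamma$ is asymptotic if and only if $\kappa_n = 0$. For the geodesic condition, the first equation of (\ref{darbouxEqs}) splits $\gamma''$ into a surface-tangential part $\kappa_g\,U$ and a surface-normal part $\kappa_n\,n_{\chi}$; a curve is a geodesic exactly when its acceleration is everywhere normal to the surface, that is when the tangential part vanishes, which is $\kappa_g = 0$. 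Both of these follow immediately once the frame decomposition in (\ref{darbouxEqs}) is in place.

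The main obstacle is the principal (line of curvature) characterization, since $\tau_g$ encodes the twisting of the normal rather than a component of the acceleration $\gamma''$. Here I would invoke Rodrigues' theorem: $\gamma$ is a line of curvature if and only if $T$ is an eigenvector of the shape operator, equivalently $n_{\chi}'$ is parallel to $T$. The third equation of (\ref{darbouxEqs}) gives $n_{\chi}' = -\kappa_n\,T - \tau_g\,U$, so $n_{\chi}'$ is proportional to $T$ precisely when its $U$-component $\tau_g$ vanishes; hence $\gamma$ is a principal line if and only if $\tau_g = 0$. I expect the only delicate point to be confirming that the scalar $\tau_g$ in the structure equations coincides with the invariant defined in (\ref{curvatsTorsi}), which is a brief computation using the identity $n_{\chi}\times(n_{\chi}\times T) = -T$; once this is checked, all three statements follow directly from the definitions cited in \cite{docarmo,gray,haci}.
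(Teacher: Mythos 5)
The paper does not actually prove Proposition \ref{propsIlk}: it is recalled as a standard fact and simply referred to \cite{docarmo,gray,haci}, so there is no in-paper argument to compare against. Your Darboux-frame proof is the canonical textbook route to exactly these three characterizations, and the asymptotic and geodesic cases are handled correctly and completely: $\kappa_n=\langle\gamma'',n_{\chi}\rangle=II(T,T)$ gives the asymptotic-line equivalence, and the splitting $T'=\kappa_g U+\kappa_n n_{\chi}$ with $U=n_{\chi}\times T$ gives the geodesic one (you also correctly read the second bullet, which the paper garbles as ``said to be geodesic curvature,'' as ``said to be a geodesic'').

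The one step you defer is the one that would fail as stated. You promise a ``brief computation'' showing that the scalar $\tau_g$ in your structure equations coincides with the invariant printed in (\ref{curvatsTorsi}). Carrying it out: from $n_{\chi}'=-\kappa_n T-\tau_g U$ and $n_{\chi}\times U=-T$ one gets $n_{\chi}\times n_{\chi}'=-\kappa_n U+\tau_g T$, hence $\langle n_{\chi}\times n_{\chi}',\,T\rangle=\tau_g$, whereas the quantity the paper actually writes, $\langle n_{\chi}\times n_{\chi}',\,T'\rangle$, evaluates by your own first structure equation to $-\kappa_n\kappa_g$, which is not the geodesic torsion (it vanishes on every asymptotic or geodesic curve regardless of whether the curve is a line of curvature). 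The formula in (\ref{curvatsTorsi}) evidently carries a typo, $T'$ in place of $T$; your Rodrigues argument proves the principal-line characterization for the correct invariant, but you should say explicitly that the identification is with $\langle n_{\chi}\times n_{\chi}',T\rangle$ and that the printed expression must be amended, rather than leaving the verification as a step that, taken literally, does not close.
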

Moreover, the fundamental forms of first and second are defined by
\begin{equation}\label{fundForms}
	\begin{aligned}
		I &= Ed{s^2} + 2Fdsdv + Gd{v^2}, \\
		II &= Ld{s^2} +2Mdsdv + Nd{v^2},
	\end{aligned}
\end{equation}
where the corresponding coefficients are calculated with following 
\begin{equation}\label{coeffs}
	\begin{aligned}
		E &= \left\langle {{\chi _s},{\chi _s}} \right\rangle , \qquad {\rm{ F}} = \left\langle {{\chi _s},{\chi _v}} \right\rangle , \qquad {\rm{ G}} = \left\langle {{\chi _v},{\chi _v}} \right\rangle,\\
		L &= \left\langle {{\chi _{ss}},n} \right\rangle ,\qquad {\rm{ M}} = \left\langle {{\chi _{sv}},n} \right\rangle ,\qquad {\rm{ N}} = \left\langle {{\chi _{vv}},n} \right\rangle.
	\end{aligned}
\end{equation}
Regarding the given coefficients, the Gaussian, $K$ and the mean, $H$ curvatures are
\begin{equation}\label{curvats}
K=\frac{LN-M^2}{EG-F^2},\qquad H=\frac{EN-2MF+LG}{2(EG-F^2)}.
\end{equation} 
In relation to the Gaussian and mean curvatures, we have the following propositions:
\begin{proposition}\label{props} (see \cite{docarmo,gray,haci})$ $
\begin{itemize}
\item 	A surface is said to be developable and to have parabolic points if the Gaussian curvature vanishes,
\item A surface is said to have hyperbolic (resp. elliptic) points if it has the negative (resp. positive) Gaussian curvature,
\item	A surface is said to be minimal if the mean curvature vanishes.
\end{itemize}
\end{proposition}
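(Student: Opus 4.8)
The plan is to recognize that the three assertions are the standard pointwise classification of surface geometry expressed through the Gaussian and mean curvatures defined in \eqref{curvats}, so the argument is essentially one of unpacking definitions by way of the principal curvatures. First I would diagonalize the shape operator at a point $p$, obtaining the principal curvatures $\kappa_1,\kappa_2$, and recall the classical identities $K=\kappa_1\kappa_2$ and $H=\tfrac12(\kappa_1+\kappa_2)$. Since the first fundamental form is positive definite, we have $EG-F^2>0$, and therefore from $K=\frac{LN-M^2}{EG-F^2}$ the sign of $K$ coincides with that of $LN-M^2$, i.e. with the determinant of the second fundamental form.

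For the second bullet I would argue pointwise. If $K(p)>0$ then $\kappa_1,\kappa_2$ share a sign, so every normal section bends the same way and $p$ is elliptic; if $K(p)<0$ the principal curvatures have opposite signs, producing a saddle with two asymptotic directions, hence $p$ is hyperbolic. The parabolic case of the first bullet is the boundary $K(p)=0$, where exactly one principal curvature vanishes. The third bullet is then immediate: $H\equiv 0$ is equivalent to $\kappa_1=-\kappa_2$ at every point, which is precisely the definition of a minimal surface.

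The only substantive content — the step I would flag as the real obstacle — is the claim that $K\equiv 0$ characterizes a \emph{developable} surface and not merely a surface consisting of parabolic points. The easy direction, that a cylinder, cone, or tangent developable is flat, is a direct computation; the converse, that a flat regular surface is locally isometric to the plane and is one of these developable types, is a genuine classical theorem that requires establishing the constancy of the tangent plane along the rulings together with an analysis of the locus where $K=0$. Rather than reprove it, I would invoke this result directly from \cite{docarmo,gray,haci}. In short, the statement is a compilation of definitions and standard classification theorems, so the proof reduces to expressing $K$ and $H$ through the principal curvatures and citing the flatness--developability equivalence for the one nontrivial assertion.
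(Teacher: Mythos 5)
Your proposal is correct, and it ends up in the same place as the paper: the paper offers no proof of this proposition at all, treating it as a compilation of standard definitions and classical facts cited from \cite{docarmo,gray,haci}, and your argument likewise reduces everything to definitional unpacking via the principal curvatures plus a citation of those same references for the one substantive claim (flatness implies developability). Your identification of that implication as the only non-definitional content is accurate and consistent with how the paper handles it.
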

Lastly, for the purposes of current paper, we refer to the following theorem:
\begin{theorem}\label{teoAl} \big(see, \cite{bukcu, masals} \big)
	If the unit vector $N_1$ has a constant angle with the fixed unit vector, then the curve $\gamma$ is said to be a slant helix. In addition, $\gamma$ is a slant helix iff $\; \; \displaystyle \frac{k_1}{k_2} = constant$.
\end{theorem}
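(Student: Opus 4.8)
The plan is to read the biconditional as the definition plus a characterization, and to prove the characterization by treating its two implications separately. In both directions the mechanism is the same: express the relevant fixed unit vector $u$ in the Bishop frame as $u = u_1 T + u_2 N_1 + u_3 N_2$, so that the hypothesis ``$N_1$ makes a constant angle with $u$'' is literally the statement $\langle N_1, u\rangle = u_2 = \text{const}$, and then differentiate, substituting the Bishop formulae \eqref{derivatChanges} (${N_1}'=-k_1 T$ and ${N_2}'=-k_2 T$) at each step. Because the Bishop normals are relatively parallel, their derivatives are purely tangential, which keeps every computation short.

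For the implication $\left(\frac{k_1}{k_2} = \text{const}\right) \Rightarrow$ slant helix I would proceed constructively. Setting $m = k_1/k_2$, I propose the candidate axis $u = \frac{1}{\sqrt{1+m^2}}(N_1 - m N_2)$. Differentiating and using the Bishop formulae gives $u' = \frac{1}{\sqrt{1+m^2}}(-k_1 + m k_2)T$, which vanishes exactly because $k_1 = m k_2$ with $m$ constant; hence $u$ is a genuine fixed unit vector, and $\langle N_1, u\rangle = (1+m^2)^{-1/2}$ is constant, exhibiting $\gamma$ as a slant helix.

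For the converse I assume $\langle N_1, u\rangle = u_2$ is constant for some fixed $u$. Differentiating this scalar relation (equivalently, reading off the $N_1$-component of $u'=0$) yields $\langle {N_1}', u\rangle = -k_1 \langle T, u\rangle = -k_1 u_1 = 0$, so $u_1 = 0$ wherever $k_1 \neq 0$. With $u_1 \equiv 0$ the axis reduces to $u = u_2 N_1 + u_3 N_2$, and unit length forces $u_2^2 + u_3^2 = 1$, so $u_3$ is constant as well. Differentiating $u = u_2 N_1 + u_3 N_2$ then gives $0 = u' = -(u_2 k_1 + u_3 k_2)T$, whence $u_2 k_1 + u_3 k_2 = 0$, i.e. $\frac{k_1}{k_2} = -\frac{u_3}{u_2} = \text{const}$, which closes the equivalence.

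The step I expect to be the genuine obstacle, and the one demanding care, is the justification of $u_1 \equiv 0$ in the converse: the identity $k_1 u_1 = 0$ only forces $u_1$ to vanish on the open set where $k_1 \neq 0$, so the degenerate configurations must be dealt with under the standing regularity hypotheses on $\gamma$ (namely $k_1 \not\equiv 0$, and $u_2 \neq 0$, since $u_2 = 0$ would send $k_1/k_2$ to infinity via $u_3 k_2 = 0$). Once the tangential component is known to vanish, everything that remains is a short algebraic manipulation of \eqref{derivatChanges}, so the substance of the argument is concentrated entirely in this vanishing and in the construction of the explicit axis in the first direction.
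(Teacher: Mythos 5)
Your argument is correct, but there is nothing in the paper to compare it against: Theorem \ref{teoAl} is stated as a quoted result, attributed to \cite{bukcu, masals}, and the authors give no proof of it. Your proof is essentially the standard one found in those references: in the forward direction you exhibit the axis explicitly as $u=\frac{1}{\sqrt{1+m^2}}(N_1-mN_2)$ with $m=k_1/k_2$ and verify $u'=0$ from ${N_1}'=-k_1T$, ${N_2}'=-k_2T$; in the converse you expand the fixed axis in the Bishop frame, use constancy of $\langle N_1,u\rangle$ to kill the tangential component, and read off $u_2k_1+u_3k_2=0$. Both computations check out, and the fact that the Bishop normals have purely tangential derivatives is indeed what makes the argument this short. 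Your flagged caveat is the right one to flag: $k_1u_1=0$ only forces $u_1=0$ off the zero set of $k_1$, and the statement $k_1/k_2=\text{const}$ tacitly assumes $k_2\neq 0$ (equivalently $u_2\neq0$); the paper, like its sources, simply ignores these degenerate configurations, so your treatment is if anything more careful than the level of rigor the paper operates at.
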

\section{Smarandache ruled surfaces according to Bishop frame in $\mathbf{E^3}$}
\begin{definition}
	Let $\gamma=\gamma(s)$ be a unit speed curve and denote $\{T,\;N_1,\;N_2\}$ as the Bishop frame of $\gamma$. We define and consider three ruled surfaces parameterized as:
\begin{equation}\label{defins}
\begin{aligned}
{^{T{N_1}} _{N_2}} \chi(s,v) &=\frac{T+N_1}{\sqrt{2}}+v N_2,\\
{^{T{N_2}} _{N_1}} \chi(s,v) &=\frac{T+N_2}{\sqrt{2}}+v N_1,\\
{^{{N_1} {N_2}} _T} \chi(s,v) &=\frac{N_1+N_2}{\sqrt{2}}+v T.
\end{aligned}
\end{equation}
By referring the study of Ouarab, (2021a), we name these ruled surfaces as $T N_1$, $T N_2$ and $N_1 N_2$ Smarandache ruled surfaces according to Bishop frame, respectively.
\end{definition}
By considering the relations, (\ref{derivatChanges}) and (\ref{transitions}), the first and second partial derivatives of the surfaces are given in respective order as follows:
\begin{equation*}
{^{T{N_1}} _{N_2}}\chi \rightarrow\left\{
\begin{aligned}
	{{^{T{N_1}} _{N_2}}\chi(s,v)}_s&=-\left( \frac{\sqrt {2}}{2}k_{{1}}+vk_{{2}} \right)T+\frac{\sqrt {2}}{2}k_{{1}} N_1+ \frac{\sqrt {2}}{2}k_{{2}} N_2,\\
	{{^{T{N_1}} _{N_2}}\chi(s,v)}_{ss}&=\left(-\frac{\sqrt {2}}{2} \left(\kappa ^2 + k_{{1}}' \right) -v k_{{2}}'\right) T +\left(  \frac{\sqrt {2}}{2} k_{{1}}' - \left( \frac{\sqrt {2}}{2 } k_{{1}} +v k_{{2}} \right) k_{{1}}\right) N_1\\
	&\quad +  \left( \frac{\sqrt {2}}{2} k_{{2}}' - \left( \frac{\sqrt {2}}{2} k_{{1}}+v\,k_{{2}} \right) k_{{2}} \right) N_2,\\
	{{^{T{N_1}} _{N_2}}\chi(s,v)}_{v}&=N_2,\qquad {{^{T{N_1}} _{N_2}} \chi(s,v)}_{sv}=-k_2 T,\qquad {{^{T{N_1}} _{N_2}}\chi(s,v)}_{vv}=0,
	\end{aligned}
\right.
\end{equation*}
\begin{equation*}
{^{T{N_2}} _{N_1}}\chi \rightarrow \left\{
\begin{aligned}
{{^{T{N_2}} _{N_1}} \chi(s,v)}_s&=-\left( \frac{\sqrt{2}}{2}k_2 + vk_1 \right)T+ \frac{\sqrt{2}}{2} k_1 N_1+\frac{\sqrt{2}}{2} k_2  N_2,\\
{{^{T{N_2}} _{N_1}} \chi(s,v)}_{ss}&=\left(-\frac{\sqrt {2}}{2} \left(\kappa ^2 + k_{{2}}' \right) -v k_{{1}}'\right) T +\left(  \frac{\sqrt {2}}{2} k_{{1}}' - \left( \frac{\sqrt {2}}{2 } k_{{2}} +v k_{{1}} \right) k_{{1}}\right) N_1\\
&\quad + \left( \frac{\sqrt {2}}{2} k_{{2}}' - \left( \frac{\sqrt {2}}{2} k_{{2}}+v k_{{1}} \right) k_{{2}} \right) N_2,\\
{{^{T{N_2}} _{N_1}} \chi(s,v)}_{v}&=N_1,\qquad {{^{T{N_2}} _{N_1}} \chi(s,v)}_{sv}=-k_1 T ,\qquad {{^{T{N_2}} _{N_1}} \chi(s,v)}_{vv}=0,
\end{aligned}
\right.
\end{equation*}
\begin{equation*}
{^{{N_1}{N_2}} _{T}}\chi \rightarrow \left\{
\begin{aligned}
{{{^{{N_1} {N_2}} _T}} \chi(s,v)}_s&=-\frac{\sqrt{2}}{2}\left( k_1+k_2\right)T+ v k_1 N_1+ v k_2 N_2,\\
{{^{{N_1} {N_2}} _T} \chi(s,v)}_{ss}&=-\left(v\kappa ^2 + \frac{\sqrt{2}}{2} (k_1 ' + k_2 ') \right) T +\left( v k_1 ' - \frac{\sqrt{2}}{2} k_1 (k_1+k_2) \right) N_1\\
&\quad + \left( vk_2 ' - \frac{\sqrt{2}}{2} k_2 (k_1 + k_2) \right) N_2,\\
{{^{{N_1} {N_2}} _T} \chi(s,v)}_{v}&=T,\qquad {{^{{N_1} {N_2}} _T} \chi(s,v)}_{sv}=k_1 N_1 + k_2 N_2,\qquad {{^{{N_1} {N_2}} _T} \chi(s,v)}_{vv}=0.
\end{aligned}
\right.
\end{equation*}
By using (\ref{normalOfSurf}), the unit normal vector fields of each surface is given as 
\begin{equation}\label{normals}
\begin{aligned}
	{^{T{N_1}} _{N_2}} n_{\chi} &=\frac{\sqrt{2} k_1 T + (\sqrt{2} k_1 + 2vk_2)N_1}{2\sqrt{ {k_1}^2 + v^2 {k_2}^2 + v k_1 k_2 \sqrt{2}}},\\ \\
	{^{T{N_2}} _{N_1}} n_{\chi}&=-\frac{\sqrt{2} k_2 T + (\sqrt{2} k_2 + 2vk_1)N_2}{2\sqrt{ {k_2}^2 + v^2 {k_1}^2 + v k_1 k_2 \sqrt{2}}} \\ \\
	{^{{N_1} {N_2}} _T} n_{\chi} &= \frac{ k_2 N_1- k_1 N_2}{ \kappa }.
\end{aligned}	
\end{equation}\\
Thus, by taking the relations (\ref{coeffs}), (\ref{curvats}) and (\ref{normals}) into account, we can provide the upcoming theorems without proofs for Gaussian and mean curvatures:
\begin{theorem} \label{teo1}
	The Gaussian and mean curvature of the ruled surface, ${^{T{N_1}} _{N_2}} \chi $ are
	\begin{align*}
	{{^{T{N_1}} _{N_2}} \chi }_{K} &=-\frac{1}{2} \left( \frac { k_{{1}} k_{{2}}}{k_{{1}}^{2} + {v}^{2} k_{{2}}^{2}+  v k_{{1}}k_{{2}}\sqrt {2}}\right)^{2}, \\
	{{^{T{N_1}} _{N_2}} \chi }_H&=\frac{k_{{1}} k_{{2}}^{2} \left( 1-2\,{v}^{2}\right)  + vk_{{2}} \left( k_{{1}}' \sqrt {2} -2 k_{{1}}^{2} \sqrt {2} \right) - vk_{{1}} k_{{2}}'  \sqrt {2}-2\, k_{{1}}^{3}}{4\, \left( k_{{1}}^{2}+{v}^{2} k_{{2}}^{2}+ v k_{{1}} k_{{2}} \sqrt {2} \right) ^{\frac{3}{2}}}.
	\end{align*}
\end{theorem}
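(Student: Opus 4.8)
The plan is to read the six fundamental coefficients of (\ref{coeffs}) directly off the partial derivatives already tabulated for ${^{T{N_1}} _{N_2}} \chi$ together with its unit normal in (\ref{normals}), and then substitute into the definitions (\ref{curvats}). Everything hinges on the orthonormality of the Bishop frame $\{T, N_1, N_2\}$, which turns each inner product into a sum of products of matching components.

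First I would record the first fundamental form. Since $\chi_v = N_2$ is a unit frame vector, one gets $G = 1$ and $F = \frac{\sqrt{2}}{2}k_2$ (the $N_2$-component of $\chi_s$) at no cost, and expanding $E = \langle \chi_s, \chi_s \rangle$ then forming $EG - F^2$ should collapse to
$$EG - F^2 = k_1^2 + v^2 k_2^2 + \sqrt{2}\,v k_1 k_2,$$
which is exactly the radicand in the denominator of the normal field. This is a convenient internal check, since $EG - F^2 = \norm{\chi_s \times \chi_v}^2$.

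Next I would turn to the second fundamental form. Because $\chi_{vv} = 0$, we have $N = 0$ immediately, and this is what makes both curvatures tractable. Pairing $\chi_{sv} = -k_2 T$ with the normal leaves only the $T$-component, giving $M = -\frac{\sqrt{2}\,k_1 k_2}{2\sqrt{EG - F^2}}$. The Gaussian curvature then follows almost at once: with $N = 0$,
$$K = \frac{LN - M^2}{EG - F^2} = -\frac{M^2}{EG - F^2} = -\frac{1}{2}\left(\frac{k_1 k_2}{EG - F^2}\right)^2,$$
which is the claimed formula once $EG - F^2$ is substituted.

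The remaining and most laborious step is $L = \langle \chi_{ss}, n\rangle$. Since the normal has no $N_2$-component, only the $T$- and $N_1$-entries of $\chi_{ss}$ contribute; along the way I would replace $\kappa^2$ by $k_1^2 + k_2^2$ from (\ref{transCurvats}) and group terms by powers of $v$. Substituting $L$, $M$, $F$, $G$ and $N = 0$ into
$$H = \frac{EN - 2MF + LG}{2(EG - F^2)} = \frac{L - 2MF}{2(EG - F^2)},$$
and clearing the common factor $2\sqrt{EG - F^2}$ from the numerator, should reproduce the stated mean curvature over the common denominator $4(EG - F^2)^{3/2}$. I expect the only genuine obstacle to be the bookkeeping in expanding $L - 2MF$ and matching it against $k_1 k_2^2(1 - 2v^2) + v k_2(\sqrt{2}\,k_1' - 2\sqrt{2}\,k_1^2) - \sqrt{2}\,v k_1 k_2' - 2k_1^3$; no conceptual difficulty arises beyond keeping the $\sqrt{2}$ factors and the $v$-powers aligned.
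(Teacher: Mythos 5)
Your proposal is correct and follows exactly the route the paper intends: the paper states Theorem \ref{teo1} ``without proof,'' deferring to the tabulated partial derivatives, the normal in (\ref{normals}), and the formulas (\ref{coeffs})--(\ref{curvats}), which is precisely the computation you outline. Your intermediate values ($G=1$, $F=\tfrac{\sqrt2}{2}k_2$, $N=0$, $M=-\tfrac{\sqrt2\,k_1k_2}{2\sqrt{EG-F^2}}$, and $EG-F^2=k_1^2+v^2k_2^2+\sqrt2\,vk_1k_2$) all check out and do reproduce the stated $K$ and $H$.
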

As a result of the theorem \ref{teo1} and the given proposition \ref{props}, we have the following corollaries:
\begin{corollary}\label{cor1} $ $
\begin{enumerate}[label=(\roman*)]
	\item The $T N_1$ Smarandache ruled surface has hyperbolic points,
	\item The $T N_1$ Smarandache ruled surface is developable iff the curve $\gamma$ is a planar curve,
	\item The $T N_1$ Smarandache ruled surface is either minimal or constant-mean-curvature (CMC) surface iff the curve $\gamma$ is a planar curve.
\end{enumerate}
\end{corollary}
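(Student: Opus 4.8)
The plan is to convert each geometric assertion into a statement about the sign or vanishing of $K$ or $H$ via Proposition \ref{props}, and then to extract that information from the explicit formulas in Theorem \ref{teo1}. For brevity write $D = k_1^{2}+v^{2}k_2^{2}+v k_1 k_2\sqrt{2}$ for the quantity under the radical; regarded as a quadratic in $v$ its discriminant is $2k_1^{2}k_2^{2}-4k_1^{2}k_2^{2}=-2k_1^{2}k_2^{2}\le 0$, so $D>0$ wherever the surface is regular. For part (i) I would simply note that $K=-\tfrac12\,(k_1 k_2/D)^{2}$ is $-\tfrac12$ times a square, hence $K\le 0$ everywhere and $K<0$ at every point with $k_1 k_2\ne 0$; by the hyperbolic/elliptic item of Proposition \ref{props} the surface therefore has hyperbolic points (the exceptional locus $k_1 k_2=0$ being exactly the planar case treated next).

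For part (ii), Proposition \ref{props} says the surface is developable iff $K\equiv 0$. Since $D>0$, the formula for $K$ gives $K\equiv 0\iff k_1 k_2\equiv 0$, so it remains to identify $k_1 k_2\equiv 0$ with planarity of $\gamma$. Here I would use $\tau=\theta'$ and $\theta=\arctan(k_2/k_1)$ from (\ref{transCurvats}): if $k_1\equiv 0$ or $k_2\equiv 0$ then $\theta$ is constant, so $\tau\equiv 0$ and $\gamma$ is planar; conversely a planar curve has $\tau\equiv 0$, hence $\theta$ constant, and the Bishop frame can be aligned so that its constant leg is the binormal of the plane, making one of $k_1,k_2$ vanish identically.

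For part (iii), minimality means $H\equiv 0$ and the CMC property means $H\equiv c$ for a constant $c$. The forward direction substitutes the planar condition $k_1 k_2\equiv 0$ into the formula for $H$: if $k_1\equiv 0$ the numerator vanishes and $H\equiv 0$ (minimal), while if $k_2\equiv 0$ the numerator reduces to $-2k_1^{3}$ and $D^{3/2}=|k_1|^{3}$, giving $H\equiv\mp\tfrac12$ (CMC). For the converse, write $H=\mathrm{Num}(s,v)/(4D^{3/2})$ where $\mathrm{Num}$ is a polynomial of degree two in $v$ with leading coefficient $-2k_1 k_2^{2}$, and suppose $H\equiv c$, i.e. $\mathrm{Num}=4c\,D^{3/2}$. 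At any $s_0$ with $k_1 k_2\ne 0$ the quadratic $D$ has negative discriminant, so as $v\to\infty$ the right-hand side grows like $4c\,|k_2|^{3}|v|^{3}$ whereas the left-hand side is $O(v^{2})$; this is impossible when $c\ne 0$, and when $c=0$ the identity $\mathrm{Num}\equiv 0$ contradicts the nonzero leading coefficient $-2k_1 k_2^{2}$. Hence $k_1 k_2\equiv 0$, and $\gamma$ is planar by part (ii).

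The step I expect to be the main obstacle is this converse in (iii): one must rule out the possibility that the degree-two numerator equals a constant multiple of $D^{3/2}$ for all $v$, which is settled by the growth-rate (equivalently, perfect-square/discriminant) comparison above and, in the borderline case $c=0$, by matching the top $v$-coefficient. A secondary delicate point, shared by (ii) and (iii), is the frame-convention step identifying $k_1 k_2\equiv 0$ with planarity, where one must invoke the freedom to align the parallel Bishop frame with the constant binormal of a plane curve so that a single Bishop curvature vanishes.
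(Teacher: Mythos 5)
Your proposal is correct and follows essentially the same route as the paper: read off the sign of $K$ and the vanishing of $K$ and $H$ from the explicit formulas of Theorem \ref{teo1}, reduce everything to $k_1k_2\equiv 0$, and translate that into planarity via $\theta=\arctan(k_2/k_1)$ and $\tau=\theta'$. The one substantive difference is that you actually prove the two converse implications that the paper asserts but leaves implicit: the growth-rate/leading-coefficient argument showing $H\equiv c$ forces $k_1k_2\equiv 0$, and the remark that for a planar curve one must use the rotational freedom of the parallel frame to make a single Bishop curvature vanish identically (for a generic Bishop frame of a plane curve $\theta$ is a nonzero constant and $k_1k_2=\kappa^2\cos\theta\sin\theta\not\equiv 0$). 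Both additions are sound and strengthen the paper's proof, which for (ii) and (iii) only treats the direction from $K=0$ (resp.\ $k_1=0$ or $k_2=0$) to the stated conclusion.
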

\begin{proof}$ $
\begin{enumerate}[label=(\roman*)]
	\item The proof is clear.
	\item Let as assume the $T N_1 $ Smarandache ruled surface is developable then ${{^{T{N_1}} _{N_2}} \chi }_{K}=0$, consequently $k_1 k_2 =0$. If $k_1=0$, then from equations (\ref{transCurvats}),	$ \displaystyle \theta = \frac{\pi}{2}k, \; k\in Z $, similarly if $k_2=0$, then $\theta = \pi k, \; k\in Z $. Since $\tau = \theta'$, we get $\tau=0$ for either case. This corresponds to that $\gamma$ is a planar curve.
	\item This is equivalent to the arguments of proof (ii), that is if $k_1=0$ then, ${{^{T{N_1}} _{N_2}} \chi }_{H}=0$, accordingly ${{^{T{N_1}} _{N_2}} \chi }$ is minimal, however if $k_2 =0$, then $\displaystyle {{^{T{N_1}} _{N_2}} \chi }_{H}=-\frac{1}{2}$ which means ${{^{T{N_1}} _{N_2}} \chi }$ is a (CMC) surface.
\end{enumerate}
\end{proof}
\begin{theorem}
The striction curve of the $T N_1$ Smarandache ruled surface is given as
\[ {^{T{N_1}} _{N_2}} \varsigma=\frac{T+N_1-k_1 k_2 N_2}{\sqrt{2}} .\]
\end{theorem}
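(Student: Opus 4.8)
The plan is to apply the striction-curve formula (\ref{strict}) directly. Reading off the definition (\ref{defins}) of the $TN_1$ Smarandache ruled surface, the base curve is $\gamma = (T+N_1)/\sqrt{2}$ and the ruling is $X = N_2$, so the whole statement reduces to evaluating the single scalar $\langle \gamma', X'\rangle / \|X'\|^2$ and substituting it back.

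First I would differentiate both ingredients using the Bishop formulae (\ref{derivatChanges}). From $T' = k_1 N_1 + k_2 N_2$ and $N_1' = -k_1 T$ one gets $\gamma' = \frac{1}{\sqrt{2}}\left(-k_1 T + k_1 N_1 + k_2 N_2\right)$, which matches the already-computed $s$-partial derivative of the surface evaluated at $v=0$. For the ruling, $X' = N_2' = -k_2 T$.

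Since the Bishop frame $\{T,N_1,N_2\}$ is orthonormal, both inner products collapse to their $T$-components, giving $\langle \gamma', X'\rangle = \frac{1}{\sqrt{2}} k_1 k_2$ and $\|X'\|^2 = k_2^{2}$. Substituting into (\ref{strict}) then yields
\[
\bar{\gamma} = \frac{T+N_1}{\sqrt{2}} - \frac{k_1 k_2/\sqrt{2}}{k_2^{2}}\,N_2 = \frac{1}{\sqrt{2}}\left(T + N_1 - \frac{k_1}{k_2}\,N_2\right).
\]

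The algebra is entirely routine once orthonormality is invoked, so there is no real obstacle in the derivation itself; the only delicate point is the final coefficient of $N_2$. My computation produces $k_1/k_2$, whereas the statement displays $k_1 k_2$. These agree precisely when $k_2^{2}=1$, and the stated form is exactly what results from keeping the numerator $\langle \gamma', X'\rangle = k_1 k_2/\sqrt{2}$ but omitting the division by $\|X'\|^2 = k_2^{2}$. I therefore expect the intended coefficient to be $k_1/k_2$; the sketch above establishes that corrected identity, and before finalizing I would re-verify $\|X'\|^2 = k_2^{2}$ to pin down the denominator.
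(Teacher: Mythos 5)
Your proposal follows exactly the same route as the paper's own proof: differentiate the base curve $(T+N_1)/\sqrt{2}$ and the ruling $N_2$ via the Bishop formulae, compute $\langle \gamma', N_2'\rangle = k_1k_2/\sqrt{2}$, and substitute into the striction formula (\ref{strict}). The paper's proof stops after computing that inner product and simply asserts that (\ref{strict}) ``completes the proof,'' which is precisely where the discrepancy you identified enters: the formula requires dividing by $\|N_2'\|^2 = k_2^2$, and carrying out that division gives the coefficient $k_1/(\sqrt{2}\,k_2)$ for $N_2$, not $k_1k_2/\sqrt{2}$. Your diagnosis is correct --- the stated theorem reproduces the numerator while omitting the denominator, and the two agree only when $k_2^2=1$. (The same omission recurs in the paper's analogous striction-curve theorem for the $TN_2$ surface, whose coefficient should likewise be $k_2/k_1$ rather than $k_1k_2$.) So your derivation is sound and, up to the corrected coefficient, is the paper's argument done carefully; the only caveat is that the formula presumes $k_2 \neq 0$, i.e.\ the ruling is non-cylindrical, which is implicit in (\ref{strict}) as well.
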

\begin{proof}
	The derivatives of both the base curve and the generator of the $T N_1 $ Smarandache ruled surface given in \ref{defins}, are
	\begin{align*}
	\left(\frac{T + N_1}{\sqrt{2}}\right)'&=\frac{1}{\sqrt{2}}\left(-k_1 T +k_1 N_1 + k_2 N_2\right),,\\
	{N_2} ' &= -k_2 T.
	\end{align*}
	The inner product of the above two relations is that $\displaystyle \frac{k_1 k_2}{\sqrt{2}}$. By considering the equation (\ref{strict}), we complete the proof.
\end{proof}
\begin{theorem}
The normal curvature, geodesic curvature and the geodesic torsion of the $T N_1 $ Smarandache ruled surface is
\begin{align}\label{tn1Curvats}
{^{T{N_1}} _{N_2}} {\kappa_n} &=\frac{{{k_1}\left( {{k_2}' + {k_1}^2 + {k_2}{k_1} + \left( {\sqrt 2 v + 1} \right){k_2}^2} \right) - {k_1}'\left( {{k_2}\sqrt 2 v + {k_1}} \right)}}{{2\sqrt {{v^2}{k_2}^2 + {k_1}{k_2}v\sqrt 2  + {k_1}^2} }},\nonumber \\
{^{T{N_1}} _{N_2}} {\kappa_g} &=\frac{{{k_1}^2\sqrt 2 \left( {{k_2}^2v\sqrt 2  - {k_2}'} \right) + {k_1}\sqrt 2 \left( {2{k_2}^3 + {k_1}'{k_2}} \right) + {k_2}\left( {4{k_2}^3v + {k_1}^3\sqrt 2 } \right)}}{{2\left( {2{k_2}^2 + {k_1}^2} \right)\sqrt {{v^2}{k_2}^2 + {k_1}{k_2}v\sqrt 2  + {k_1}^2} }},\\
{^{T{N_1}} _{N_2}} {\tau_g} &=\frac{{{{\rm{\eta }}_1}{{\rm{\lambda }}_3}\left( {\sqrt 2 {k_1} + 2{k_2}v} \right) - {{\rm{\eta }}_3}{{\rm{\lambda }}_1}\left( {{k_1}\sqrt 2  + 2{k_2}v} \right) + {k_1}\sqrt 2 \left( {{{\rm{\eta }}_3}{{\rm{\lambda }}_2} - {{\rm{\lambda }}_3}{{\rm{\eta }}_2}} \right)}}{{2\sqrt {{v^2}{k_2}^2 + {k_1}{k_2}v\sqrt 2  + {k_1}^2} }}.\nonumber
	\end{align}
respectively.
\end{theorem}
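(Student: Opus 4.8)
All three quantities are read straight off their definitions in (\ref{curvatsTorsi}), applied to the $TN_1$ Smarandache surface whose base curve is $\beta(s)=\tfrac{1}{\sqrt2}(T+N_1)$ and whose unit normal $n_\chi$ is already recorded in (\ref{normals}). Thus the whole task reduces to (a) producing, in the Bishop frame $\{T,N_1,N_2\}$, the vectors that enter the three inner products $\langle\beta'',n_\chi\rangle$, $\langle n_\chi\times T,T'\rangle$ and $\langle n_\chi\times n_\chi',T'\rangle$, and (b) collapsing the result with the identity $\kappa^2=k_1^2+k_2^2$. A useful structural fact to exploit throughout is that $n_\chi$ lies in the $T$-$N_1$ plane (its $N_2$-component is zero), which shortens every cross and triple product below.

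First I differentiate the base curve with the Bishop formulae (\ref{derivatChanges}). This gives $\beta'=\tfrac{1}{\sqrt2}(-k_1T+k_1N_1+k_2N_2)$, exactly as computed in the striction-curve proof, and one more differentiation yields $\beta''$; normalizing $\beta'$ produces the unit tangent $T$ that (\ref{curvatsTorsi}) refers to, together with its derivative, which I write as $T'=\eta_1T+\eta_2N_1+\eta_3N_2$ and whose Bishop components I keep as the named symbols $\eta_1,\eta_2,\eta_3$. For the normal curvature I then form the single dot product $\kappa_n=\langle\beta'',n_\chi\rangle$: both factors are explicit, only the $T$- and $N_1$-components of $n_\chi$ survive, and substituting $\kappa^2=k_1^2+k_2^2$ reduces the expression to the stated closed form over the common denominator $2\sqrt{v^2k_2^2+k_1k_2v\sqrt2+k_1^2}$.

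For the geodesic curvature I compute $n_\chi\times T$ in the frame; because $n_\chi$ has no $N_2$-component this cross product has a compact form, and pairing it with $T'$ (then clearing the normalization of the unit tangent) is a purely algebraic simplification to the stated quotient. The geodesic torsion is the laborious step and the main obstacle. It needs $n_\chi'$, obtained by differentiating the quotient in (\ref{normals}) via the quotient rule together with $N_1'=-k_1T$ and $N_2'=-k_2T$; I abbreviate its three Bishop components as $\lambda_1,\lambda_2,\lambda_3$. Then $\tau_g=\langle n_\chi\times n_\chi',T'\rangle$ is the scalar triple product $\det[\,n_\chi;\,n_\chi';\,T'\,]$. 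Expanding along the row of $n_\chi$ and using that its third component vanishes leaves precisely two groups of terms, with the surviving components $\sqrt2k_1$ and $\sqrt2k_1+2vk_2$ of $n_\chi$ appearing as the coefficients multiplying the $2\times2$ minors $\eta_1\lambda_3-\eta_3\lambda_1$ and $\eta_3\lambda_2-\eta_2\lambda_3$; this reproduces the displayed formula for $\tau_g$.

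The reason $\tau_g$ is presented through the placeholders $\eta_i$ and $\lambda_i$, rather than fully in $k_1,k_2$, is that the explicit components of $n_\chi'$ (and of $T'$ after normalizing $\beta'$) are extremely bulky, so naming them is what keeps the statement legible; $\kappa_n$ and $\kappa_g$, by contrast, simplify enough to be written out. I expect no conceptual difficulty: the only genuine hazards are bookkeeping ones, namely differentiating the normalized normal field correctly and not dropping cross terms when expanding the triple product. Everything else follows the same pattern already used, without proof, for the Gaussian and mean curvatures in Theorem~\ref{teo1}.
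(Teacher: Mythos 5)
Your proposal is correct and follows essentially the same route as the paper: compute the unit tangent of the Smarandache base curve and its derivative (the $\eta_i$), the second derivative of the base curve, and the derivative of the unit normal (the $\lambda_i$), then substitute into the three inner products of (\ref{curvatsTorsi}), using that $n_\chi$ has vanishing $N_2$-component to collapse the triple product for $\tau_g$. The only difference is cosmetic — the paper writes out the explicit Bishop components of $T_{TN_1}'$ and $n_\chi'$ rather than describing them, which is exactly the bookkeeping you flagged.
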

\begin{proof}
By referring the relations in \ref{transitions}, the tangent vector and the derivative of the tangent vector of $T N_1$ Smarandache curve are given as
\begin{equation}
\begin{aligned}
T_{T{N_1}}=&\frac{-k_1 T + k_1 N_1 + k_2 N_2}{\sqrt{{k_2}^2 + 2 {k_1}^2}},\\
{T_{T{N_1}}}' =& {{\rm{\eta }}_1}T + {{\rm{\eta }}_2} N_1 +{{\rm{\eta }}_3} N_2
\end{aligned}
\end{equation}
where 
\[\left[ \begin{array}{l}
{{\rm{\eta }}_1}\\
{{\rm{\eta }}_2}\\
{{\rm{\eta }}_3}
\end{array} \right] = \frac{1}{{{{\left( {{k_2}^2 + 2{k_1}^2} \right)}^{\frac{3}{2}}}}}\left[ \begin{array}{l}
- {k_2}^2\left( {{k_2}^2 + 3{k_1}^2} \right) - 2{k_1}^4 + {k_2}\left( {{k_1}{k_2}' - {k_2}{k_1}'} \right)\\
- {k_1}^2\left( {{k_2}^2 + 2{k_1}^2} \right) + {k_2}\left( {{k_2}{k_1}' - {k_1}{k_2}'} \right)\\
{k_1}\left( { - {k_2}^3 - 2\left( {{k_2}{k_1}^2 - {k_1}{k_2}' + {k_2}{k_1}'} \right)} \right)
\end{array} \right].\]
On the other hand the second order derivative of $T N_1$ Smarandache curve is
\[\left(\frac{T + N_1}{\sqrt{2}}\right)''=\frac{{ - \left( {{k_1}^2 + {k_2}^2 + \;{\mkern 1mu} {k_1}'} \right)T + \left( {{k_1}' - {k_1}^2} \right){N_1} + \left( {{\mkern 1mu} {k_2}' - {k_1}{k_2}} \right){N_2}} }{\sqrt{2}}.\]
Lastly, the derivative of the normal vector field of the $T N_1$ Smarandache ruled surface is given as follows:
\[\left({^{T{N_1}} _{N_2}} n_{\chi} \right)'= \lambda_1 T+\lambda_2 N_1 + \lambda_3 N_2,\]
where
\[\left[ {\begin{array}{*{20}{c}}
	{{{\rm{\lambda }}_1}}\\ \\
	{{{\rm{\lambda }}_2}}\\ \\
	{{{\rm{\lambda }}_3}}
	\end{array}} \right] = \frac{1}{{2{{\left( {{v^2}{k_2}^2 + {k_1}{k_2}v\sqrt 2  + {k_1}^2} \right)}^{\frac{3}{2}}}}}\left[ \begin{array}{l}
{k_1}'{k_2}v\left( {{k_2}v\sqrt 2  + {k_1}} \right) - {k_1}{k_2}'v\left( {{k_2}\sqrt 2 v + {k_1}} \right)...\\
\,\,\,\,\,\,\,\,\,\, - {k_1}{k_2}^2v\left( {3{k_1}v\sqrt 2  + 2{k_2}{v^2}} \right) - {k_1}^3\left( {{k_1}\sqrt 2  + 4{k_2}v} \right)\\ \\
{k_1}\left( {{k_1}{k_2}^2\sqrt 2 {v^2} + 2{k_2}{k_1}^2v - {k_2}{k_1}'v + {k_1}^3\sqrt 2  + {k_1}{k_2}'v} \right)\\ \\
{k_1}{k_2}\sqrt 2 \left( {{v^2}{k_2}^2 + {k_1}{k_2}v\sqrt 2  + {k_1}^2} \right)
\end{array} \right].\]
Upon substituting these relations into \ref{curvatsTorsi}, the proof is completed. According to this theorem, we provide the following two corollaries without the need for proof.
\end{proof}
\begin{corollary}$ $
\begin{enumerate}[label=(\roman*)]
	\item The $T N_1$ Smarandache curve is asymptotic on $T N_1$ Smarandache ruled surface if $k_1=0$ that is 
	$ \displaystyle \theta = \frac{\pi}{2}k, \; k\in Z $.
	\item The $T N_1$ Smarandache curve is geodesic on $T N_1$ Smarandache ruled surface if $k_2=0$ that is
	$ \displaystyle \theta = \pi k, \; k\in Z $.
\end{enumerate}
\end{corollary}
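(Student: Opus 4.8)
The plan is to read both assertions directly off the closed forms of $^{T{N_1}} _{N_2}\kappa_n$ and $^{T{N_1}} _{N_2}\kappa_g$ already recorded in \eqref{tn1Curvats}, using the characterizations of Proposition \ref{propsIlk}: the base ($T N_1$ Smarandache) curve is asymptotic on the surface exactly when its normal curvature vanishes, and geodesic exactly when its geodesic curvature vanishes. Since both invariants have been computed, no fresh differential-geometric work is needed; the whole corollary reduces to checking that the relevant numerators collapse under the stated hypotheses, and then translating the curvature conditions into conditions on the Bishop angle $\theta$ via \eqref{transCurvats}.

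For part (i), I would first read the hypothesis as $k_1 \equiv 0$ along $\gamma$, so that also $k_1' \equiv 0$. Inspecting the numerator of $^{T{N_1}} _{N_2}\kappa_n$ in \eqref{tn1Curvats}, one sees it is the sum of a term carrying the overall factor $k_1$ and a term carrying the overall factor $k_1'$; hence it vanishes identically once $k_1 = k_1' = 0$, giving $^{T{N_1}} _{N_2}\kappa_n = 0$ and, by Proposition \ref{propsIlk}, the asymptotic property. The angular statement then follows from \eqref{transCurvats}: writing $k_1 = \kappa\cos\theta$, the condition $k_1 = 0$ for a non-degenerate curve ($\kappa\neq 0$) is equivalent to $\cos\theta = 0$, i.e. $\theta = \tfrac{\pi}{2}k$ in the paper's notation.

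For part (ii), I would argue symmetrically, taking $k_2 \equiv 0$ and hence $k_2' \equiv 0$. Expanding the three summands of the numerator of $^{T{N_1}} _{N_2}\kappa_g$ in \eqref{tn1Curvats}, each resulting term carries a factor of either $k_2$ or $k_2'$; in particular the isolated $-k_2'$ inside the first bracket is the only contribution \emph{not} visibly multiplied by $k_2$, and it is exactly this term for which the identification $k_2' = 0$ is required. With $k_2 = k_2' = 0$ every term drops out, so $^{T{N_1}} _{N_2}\kappa_g = 0$ and the curve is geodesic; finally $k_2 = \kappa\sin\theta = 0$ forces $\sin\theta = 0$, i.e. $\theta = \pi k$.

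Since the argument is pure substitution, there is no genuinely hard step; the one place requiring care — and where a hasty reader might slip — is the passage from pointwise to identical vanishing. One must use that $k_1$ (respectively $k_2$) is assumed to vanish as a function of $s$, so that its derivative vanishes as well, for otherwise the $k_1'$ term in $^{T{N_1}} _{N_2}\kappa_n$ and the $-k_2'$ term in $^{T{N_1}} _{N_2}\kappa_g$ would survive and the numerators would not collapse. A secondary check is that the invariants remain defined: they are rational in $v$ with denominator nonvanishing on the regular locus, and the degenerate situation $\kappa = 0$ (where $\gamma$ is a straight line) is simply excluded, which is automatic as soon as the complementary curvature is nonzero.
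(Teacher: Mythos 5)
Your proposal is correct and coincides with the paper's (implicit) justification: the paper states this corollary ``without the need for proof,'' meaning exactly the substitution you carry out --- setting $k_1=k_1'=0$ (resp.\ $k_2=k_2'=0$) annihilates every term in the numerator of ${^{T{N_1}}_{N_2}}\kappa_n$ (resp.\ ${^{T{N_1}}_{N_2}}\kappa_g$) in (\ref{tn1Curvats}), after which Proposition \ref{propsIlk} and the relation $k_1=\kappa\cos\theta$, $k_2=\kappa\sin\theta$ from (\ref{transCurvats}) give the stated conclusions. Your explicit attention to the passage from $k_1\equiv 0$ to $k_1'\equiv 0$ and to the nonvanishing of the denominators is a sound refinement of what the paper leaves unsaid.
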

\begin{theorem}\label{teo2}
	The Gaussian and mean curvature of the ruled surface, ${^{T{N_2}} _{N_1}}  \chi $ are
	\begin{align*}
	{{^{T{N_2}} _{N_1}}  \chi }_{K} &=-\frac{1}{2} \left( \frac { k_{{1}} k_{{2}}}{k_{{2}}^{2} + {v}^{2} k_{{1}}^{2}+  v k_{{1}}k_{{2}}\sqrt {2}}\right)^{2}, \\
	{{^{T{N_2}} _{N_1}}  \chi }_H&=\frac{k_{{1}}^2 k_{{2}} \left( 1-2\,{v}^{2}\right)  + vk_{{1}} \left( k_{{2}}' \sqrt {2} -2 k_{{2}}^{2} \sqrt {2} \right) - vk_{{1}}' k_{{2}}  \sqrt {2}-2 k_{{2}}^{3}}{4\, \left( k_{{2}}^{2}+{v}^{2} k_{{1}}^{2}+ v k_{{1}} k_{{2}} \sqrt {2} \right) ^{\frac{3}{2}}}.
	\end{align*}
\end{theorem}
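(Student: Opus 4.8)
The plan is to compute the six fundamental coefficients of ${^{T{N_2}} _{N_1}}\chi$ directly from the partial derivatives listed above and the unit normal ${^{T{N_2}} _{N_1}}n_{\chi}$ in (\ref{normals}), and then substitute into the formulas (\ref{curvats}) for $K$ and $H$. Because $\{T,N_1,N_2\}$ is orthonormal, the first–form coefficients can be read off immediately from $\chi_s$ and $\chi_v=N_1$: one gets $G=\langle N_1,N_1\rangle=1$, $F=\langle\chi_s,N_1\rangle=\tfrac{\sqrt2}{2}k_1$, and $E=\big(\tfrac{\sqrt2}{2}k_2+vk_1\big)^2+\tfrac12 k_1^2+\tfrac12 k_2^2$. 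A short simplification then yields the crucial identity $EG-F^2=k_2^2+v^2k_1^2+vk_1k_2\sqrt2$, which is exactly the quantity appearing under the radical in (\ref{normals}); I would record this first, since it is the common denominator of every subsequent expression. Call it $D$ for brevity.

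For the second form, the key simplification is that $\chi_{vv}=0$ forces $N=\langle\chi_{vv},n\rangle=0$. Hence $K=(LN-M^2)/(EG-F^2)$ collapses to $K=-M^2/(EG-F^2)$, which already accounts for both the minus sign and the squared factor in the stated Gaussian curvature. The mixed coefficient is equally painless: since $\chi_{sv}=-k_1T$ and the normal has $T$–component $-\tfrac{\sqrt2 k_2}{2\sqrt D}$, one finds $M=\tfrac{\sqrt2}{2}\,\tfrac{k_1k_2}{\sqrt D}$, so that $M^2=\tfrac{k_1^2k_2^2}{2D}$ and $K=-\tfrac12\big(k_1k_2/D\big)^2$, matching the claim.

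The only laborious step is $L=\langle\chi_{ss},n\rangle$, needed for the mean curvature. Since $n$ has no $N_1$–component, only the $T$– and $N_2$–entries of $\chi_{ss}$ contribute, giving
\[
L=\Big(-\tfrac{\sqrt2}{2}(\kappa^2+k_2')-vk_1'\Big)\Big(-\tfrac{\sqrt2 k_2}{2\sqrt D}\Big)+\Big(\tfrac{\sqrt2}{2}k_2'-\big(\tfrac{\sqrt2}{2}k_2+vk_1\big)k_2\Big)\Big(-\tfrac{\sqrt2 k_2+2vk_1}{2\sqrt D}\Big).
\]
With $N=0$ and $G=1$ the mean curvature reduces to $H=(L-2MF)/\big(2(EG-F^2)\big)=(L-2MF)/(2D)$; multiplying numerator and denominator by $\sqrt D$ turns the denominator into $2D^{3/2}$, and then the substitution $\kappa^2=k_1^2+k_2^2$ together with the term $2MF=k_1^2k_2/\sqrt D$ should collapse the numerator to the displayed polynomial. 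This expansion—keeping the $\sqrt2$ factors and the powers of $v$ in proper order—is where essentially all of the effort lies, and where a sign slip or a misplaced $v^2$ is easiest to commit.

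Finally, I would cross-check the whole computation against Theorem \ref{teo1} using the symmetry of the construction: interchanging the two Bishop normals $N_1\leftrightarrow N_2$ (and correspondingly $k_1\leftrightarrow k_2$) carries ${^{T{N_1}}_{N_2}}\chi$ to ${^{T{N_2}}_{N_1}}\chi$, and indeed the formulas asserted here are obtained from those of Theorem \ref{teo1} by the substitution $k_1\leftrightarrow k_2$. This provides a fast consistency test for both $K$ and the numerator of $H$ without redoing the algebra from scratch.
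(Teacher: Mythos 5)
Your overall route is exactly the one the paper intends: the theorem is stated ``without proof,'' and the implicit derivation is precisely to read off $E,F,G,L,M,N$ from the listed partial derivatives and the unit normal in (\ref{normals}) and substitute into (\ref{curvats}). Your first-form coefficients, the identity $EG-F^2=k_2^2+v^2k_1^2+vk_1k_2\sqrt2=D$, the values $N=0$ and $M=\tfrac{\sqrt2}{2}k_1k_2/\sqrt D$, and the resulting $K=-M^2/D=-\tfrac12(k_1k_2/D)^2$ are all correct, so the Gaussian-curvature half of the proof is complete.

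There is, however, a genuine problem in the mean-curvature half, and it sits precisely in the step you propose as a ``fast consistency test.'' The swap $N_1\leftrightarrow N_2$, $k_1\leftrightarrow k_2$ is an orientation-reversing map of the Bishop frame, so it sends $\chi_s\times\chi_v$ to the \emph{negative} of the naively substituted expression; you can see this in (\ref{normals}) itself, where ${^{T{N_2}}_{N_1}}n_\chi$ carries an overall minus sign that ${^{T{N_1}}_{N_2}}n_\chi$ does not. Under this swap $L$ and $M$ change sign, $K=-M^2/D$ is unaffected, but $H$ flips sign --- so the symmetry argument validates $K$ but is inconclusive (indeed misleading) for $H$. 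If you actually expand the expression for $L$ that you wrote down (which is set up correctly) and form $(L-2MF)/(2D)$, the numerator you obtain is
\[
-k_1^2k_2\left(1-2v^2\right)-vk_1\left(k_2'\sqrt2-2k_2^2\sqrt2\right)+vk_1'k_2\sqrt2+2k_2^3,
\]
i.e.\ exactly the \emph{negative} of the displayed numerator: with the paper's own normal (\ref{normals}) and formula (\ref{curvats}) one gets $-H_{\text{stated}}$, whereas the analogous computation for Theorem \ref{teo1} reproduces that theorem's $H$ on the nose. So the displayed $H$ in Theorem \ref{teo2} corresponds to the opposite orientation of the normal, and a proof of the statement as written must either adopt $-{^{T{N_2}}_{N_1}}n_\chi$ or note the sign convention explicitly; relying on the $k_1\leftrightarrow k_2$ shortcut would cause you to assert the stated formula without ever detecting the discrepancy. (Since the sign of $H$ is orientation-dependent, none of the minimality or CMC conclusions in Corollary \ref{cor2} are affected.)
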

By the theorem \ref{teo2} and proposition \ref{props}, we have the similar corollaries as following:
\begin{corollary}\label{cor2} $ $
	\begin{enumerate}[label=(\roman*)]
		\item The $T N_2$ Smarandache ruled surface has hyperbolic points,
		\item The $T N_2$ Smarandache ruled surface is developable iff the curve $\gamma$ is a planar curve,
		\item The $T N_2$ Smarandache ruled surface is either minimal or constant-mean-curvature (CMC) surface iff the curve $\gamma$ is a planar curve.
	\end{enumerate}
\end{corollary}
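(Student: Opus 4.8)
The plan is to mirror the structure of Corollary \ref{cor1}, since Corollary \ref{cor2} is the exact analogue for the $T N_2$ surface and the Gaussian and mean curvatures in Theorem \ref{teo2} are obtained from those in Theorem \ref{teo1} by the symmetric swap $k_1 \leftrightarrow k_2$. I would invoke Proposition \ref{props}, which links the vanishing (or sign) of the Gaussian curvature $K$ to developability and parabolic/hyperbolic points, and the vanishing of the mean curvature $H$ to minimality.

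First I would establish part (i). Reading off the expression in Theorem \ref{teo2}, the Gaussian curvature is
\[
{{^{T{N_2}} _{N_1}}\chi}_K = -\frac{1}{2}\left(\frac{k_1 k_2}{k_2^2 + v^2 k_1^2 + v k_1 k_2 \sqrt{2}}\right)^2,
\]
which is manifestly a negative multiple of a squared quantity, hence ${{^{T{N_2}} _{N_1}}\chi}_K \le 0$ at every point; by Proposition \ref{props} this means the surface has hyperbolic points wherever $K<0$, which gives (i).

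For part (ii), I would assume developability, so ${{^{T{N_2}} _{N_1}}\chi}_K = 0$, which forces the numerator $k_1 k_2 = 0$. Splitting into the cases $k_1 = 0$ and $k_2 = 0$ and feeding each into the relations (\ref{transCurvats}), one obtains $\theta = \pi k$ or $\theta = \frac{\pi}{2}k$ for $k \in \mathbb{Z}$ respectively; in either case $\theta$ is constant, so $\tau = \theta' = 0$, which is exactly the condition that $\gamma$ be planar. The converse follows by reversing the implication. For part (iii), I would set $H = 0$ in the mean-curvature formula: when $k_1 = 0$ the numerator collapses to $-2 k_2^3 \ne 0$ (in general) giving a nonzero constant contribution, whereas when $k_2 = 0$ the surviving terms reduce $H$ to a constant, showing the surface is either minimal or CMC precisely when $\gamma$ is planar.

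Because the algebra is identical up to relabelling, the main obstacle is not computational but bookkeeping: one must verify that the $k_1 \leftrightarrow k_2$ interchange sends the planarity analysis of Corollary \ref{cor1} to the correct angular conditions here, so that $k_1 = 0$ and $k_2 = 0$ each still yield $\tau = 0$ and the correct (minimal versus CMC) dichotomy for $H$. Since both corollaries reduce to the same planarity criterion $\tau = 0$, the statements transfer verbatim, and no genuinely new difficulty arises.
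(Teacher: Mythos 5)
Your proposal is correct and follows essentially the same route as the paper, which for Corollary \ref{cor2} simply states that the argument is identical to that of Corollary \ref{cor1} with the roles of $k_1$ and $k_2$ interchanged: read off the sign of $K$ for (i), reduce $K=0$ to $k_1k_2=0$ and hence $\tau=\theta'=0$ for (ii), and split into the cases $k_2=0$ (giving $H=0$, minimal) and $k_1=0$ (giving $H=\mp\tfrac12$, CMC) for (iii). Only a bookkeeping slip is present: $k_1=0$ forces $\cos\theta=0$ and $k_2=0$ forces $\sin\theta=0$, so your two angular conditions are swapped relative to (\ref{transCurvats}), but this is immaterial since either way $\theta$ is constant and $\tau=0$.
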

	\begin{proof}$ $
The proofs of each corollary is as same as the proofs of Corollary \ref{cor1}.
	\end{proof}
\begin{theorem}
	The striction curve of the $T N_2$ Smarandache ruled surface is given as
	\[{^{T{N_2}} _{N_1}} \varsigma=\frac{T-k_1 k_2 N_1+N_2}{\sqrt{2}} .\]
\end{theorem}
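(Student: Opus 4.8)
The plan is to proceed exactly as in the companion theorem for the $T N_1$ surface, exploiting the fact that the $T N_2$ surface is its mirror image under the interchange of the roles of $N_1$ and $N_2$. First I would read off from (\ref{defins}) that the base curve of ${^{T{N_2}} _{N_1}} \chi$ is $\frac{T+N_2}{\sqrt{2}}$ and that the generator (ruling) is $X=N_1$ (this is confirmed by the listed partial derivative ${{^{T{N_2}} _{N_1}} \chi}_{v}=N_1$). The whole statement then reduces to feeding these two ingredients into the striction formula (\ref{strict}).

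Next I would differentiate both ingredients using the Bishop formulae (\ref{derivatChanges}). For the base curve, $T'=k_1 N_1+k_2 N_2$ together with $N_2'=-k_2 T$ yields
\[
\left(\frac{T+N_2}{\sqrt{2}}\right)'=\frac{1}{\sqrt{2}}\left(-k_2 T + k_1 N_1 + k_2 N_2\right),
\]
while the generator satisfies $N_1'=-k_1 T$. The only genuine calculation is the inner product $\langle \gamma',X'\rangle$: since $\{T,N_1,N_2\}$ is orthonormal, only the $T$-components contribute, so $\langle \gamma',N_1'\rangle=\frac{1}{\sqrt{2}}(-k_2)(-k_1)=\frac{k_1 k_2}{\sqrt{2}}$.

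Finally I would substitute this inner product together with $X=N_1$ into (\ref{strict}) to recover the asserted striction curve ${^{T{N_2}} _{N_1}} \varsigma=\frac{T-k_1 k_2 N_1+N_2}{\sqrt{2}}$. I do not expect any real obstacle here: the argument is a routine application of (\ref{strict}), and the outcome is precisely the $T N_1$ striction curve with $N_1$ and $N_2$ interchanged, consistent with the symmetry between the two surface definitions. The only points demanding care are correctly identifying $N_1$ (rather than $N_2$) as the generator in this case, and tracking which Bishop curvature multiplies which frame vector when differentiating, so that the surviving $T$-components are paired correctly in the inner product.
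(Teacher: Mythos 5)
Your proposal follows the paper's proof essentially verbatim: the same two derivatives $\left(\frac{T+N_2}{\sqrt{2}}\right)'=\frac{1}{\sqrt{2}}\left(-k_2T+k_1N_1+k_2N_2\right)$ and $N_1'=-k_1T$, the same inner product $\langle\gamma',N_1'\rangle=\frac{k_1k_2}{\sqrt{2}}$, and the same concluding substitution into (\ref{strict}). One caveat, which your write-up shares with the paper's own proof: formula (\ref{strict}) divides the inner product by $\left\|X'\right\|^2$, and here $\left\|N_1'\right\|^2=k_1^2$, so a literal application of (\ref{strict}) yields the coefficient $-\frac{k_2}{\sqrt{2}\,k_1}$ on $N_1$ rather than the stated $-\frac{k_1k_2}{\sqrt{2}}$; the two agree only when $k_1^2=1$. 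Since you assert that the substitution ``recovers the asserted striction curve'' without carrying out this division, your final step inherits exactly the same unjustified simplification as the paper, and to be airtight you would need either to perform the division explicitly or to explain why the normalizing factor is being suppressed.
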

\begin{proof}
The derivative of both the base curve and the generator of $T N_2 $ Smarandache ruled surface defined in \ref{defins}, are
\begin{align*}
\left(\frac{T + N_2}{\sqrt{2}}\right)'&=\frac{\sqrt{2}}{2}\left(-k_2 T +k_1 N_1 + k_2 N_2\right),,\\
{N_1} ' &= -k_1 T.
\end{align*}
The inner product of the left hand sides of these relations results $\displaystyle \frac{k_1 k_2}{\sqrt{2}}$. By considering the equation (\ref{strict}), the proof is complete.
\end{proof}

\begin{theorem}
	The normal curvature, geodesic curvature and the geodesic torsion of the $T N_2 $ Smarandache ruled surface is
	\begin{align}\label{tn2Curvats}
{^{T{N_2}} _{N_1}} {\kappa_n} &=\frac{{{k_1}v\sqrt 2 \left( { {k_2}^2 - {k_2}'} \right) + {k_2}\left( {{k_1}^2 + 2{k_2}^2} \right)}}{{2\sqrt {{v^2}{k_1}^2 + {k_2}{k_1}v\sqrt 2  + {k_2}^2} }},\nonumber \\
{^{T{N_2}} _{N_1}} {\kappa_g} &=\frac{{2\left( {{k_1}'{k_2} - {k_1}{k_2}'} \right)\left( {\sqrt 2 {k_2} + {k_1}v} \right) - {k_1}{k_2}\sqrt 2 \left( {2{k_2}^2 + {k_1}^2} \right) - 2{k_1}^2v\left( {{k_1}^2 + 4{k_2}^2} \right)}}{{2\left( {2{k_2}^2 + {k_1}^2} \right)\sqrt {{k_1}{k_2}v\sqrt 2  + {v^2}{k_1}^2 + {k_2}^2} }},\\
{^{T{N_2}} _{N_1}} {\tau_g} &=\frac{{\left( {{{\rm{\alpha }}_2}{{\rm{\omega }}_1} - {{\rm{\alpha }}_1}{{\rm{\omega }}_2}} \right)\left( {{k_2}\sqrt 2  + 2{k_1}v} \right) + {k_2}\sqrt 2 \left( {{{\rm{\alpha }}_3}{{\rm{\omega }}_2} - {{\rm{\alpha }}_2}{{\rm{\omega }}_3}} \right)}}{{2\sqrt {{k_1}{k_2}v\sqrt 2  + {v^2}{k_1}^2 + {k_2}^2} }},\nonumber
	\end{align}
	respectively.
\end{theorem}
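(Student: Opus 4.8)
The plan is to follow verbatim the route used for the $T N_1$ surface, interchanging the roles of $k_1$ and $k_2$ (and of $N_1$ and $N_2$) as dictated by the definition \eqref{defins}. First I would differentiate the base curve $\frac{T+N_2}{\sqrt 2}$ with the Bishop formulae \eqref{derivatChanges}, obtaining $\frac{\sqrt 2}{2}\left(-k_2 T + k_1 N_1 + k_2 N_2\right)$, and normalise to read off the unit tangent of the $T N_2$ Smarandache curve,
\[
T_{T N_2}=\frac{-k_2 T + k_1 N_1 + k_2 N_2}{\sqrt{k_1{}^2 + 2 k_2{}^2}}.
\]
A further differentiation, using \eqref{derivatChanges} again and collecting the $T$, $N_1$, $N_2$ components over the common factor $\left(k_1{}^2+2k_2{}^2\right)^{3/2}$, produces ${T_{T N_2}}'=\alpha_1 T+\alpha_2 N_1+\alpha_3 N_2$; the coefficients $\alpha_1,\alpha_2,\alpha_3$ appearing in the statement are exactly these, obtained in the same fashion as the $\eta_i$ of the $T N_1$ computation.

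Next I would assemble the two remaining ingredients of \eqref{curvatsTorsi}. Differentiating the base curve a second time gives
\[
\left(\frac{T+N_2}{\sqrt 2}\right)''=\frac{-\left(k_1{}^2+k_2{}^2+k_2'\right)T+\left(k_1'-k_1 k_2\right)N_1+\left(k_2'-k_2{}^2\right)N_2}{\sqrt 2},
\]
and differentiating the unit normal ${^{T{N_2}} _{N_1}} n_{\chi}$ from \eqref{normals} yields $\left({^{T{N_2}} _{N_1}} n_{\chi}\right)'=\omega_1 T+\omega_2 N_1+\omega_3 N_2$, whose coefficients $\omega_1,\omega_2,\omega_3$ are produced exactly as the $\lambda_i$ were and carry the radical $\left(v^2 k_1{}^2 + k_1 k_2 v\sqrt 2 + k_2{}^2\right)^{3/2}$ in their denominator.

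Finally I would substitute these expressions into the three defining inner products \eqref{curvatsTorsi}. For $\kappa_n=\langle\gamma'',n_\chi\rangle$ the normal has no $N_1$ component, so only the $T$ and $N_2$ parts of the second derivative survive, giving the first line of \eqref{tn2Curvats} after clearing the radical. For $\kappa_g=\langle n_\chi\times T,T'\rangle$ and $\tau_g=\langle n_\chi\times n_\chi',T'\rangle$ I would expand the cross products in the Bishop basis (using $T\times N_1=N_2$, $T\times N_2=-N_1$, $N_1\times N_2=T$) and pair the result with ${T_{T N_2}}'=\alpha_1 T+\alpha_2 N_1+\alpha_3 N_2$; this is where the combinations $\alpha_2\omega_1-\alpha_1\omega_2$ and $\alpha_3\omega_2-\alpha_2\omega_3$ in the $\tau_g$ formula arise. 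The main obstacle is purely computational: the geodesic torsion requires the cross product $n_\chi\times n_\chi'$ together with the full $\alpha_i,\omega_i$ data, and one must track the $\left(v^2 k_1{}^2 + k_1 k_2 v\sqrt 2 + k_2{}^2\right)$ factors carefully so that the cube-root denominators collapse to the single square root displayed in \eqref{tn2Curvats}.
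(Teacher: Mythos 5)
Your proposal follows the same route as the paper's own proof: compute the unit tangent $T_{T N_2}$ of the base Smarandache curve and its derivative, the second derivative of the base curve, and the derivative of the unit normal field from \eqref{normals}, then substitute all of these into \eqref{curvatsTorsi}. The one discrepancy is your assignment of the labels $\alpha_i$ and $\omega_i$: in the paper's proof $\omega_1,\omega_2,\omega_3$ are the components of ${T_{T N_2}}'$ (the analogues of the $\eta_i$ from the $T N_1$ case, with the $\left(k_1^2+2k_2^2\right)^{3/2}$ denominator), while $\alpha_1,\alpha_2,\alpha_3$ are the components of $\left({^{T{N_2}} _{N_1}} n_{\chi}\right)'$ (the analogues of the $\lambda_i$, carrying the $\left(v^2k_1^2+k_1k_2v\sqrt{2}+k_2^2\right)^{3/2}$ denominator); you have reversed the two. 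This is not merely cosmetic here, because the numerator of the displayed $\tau_g$, namely $\left(\alpha_2\omega_1-\alpha_1\omega_2\right)\left(k_2\sqrt{2}+2k_1v\right)+k_2\sqrt{2}\left(\alpha_3\omega_2-\alpha_2\omega_3\right)$, is antisymmetric under exchanging $\alpha$ and $\omega$, so with your convention the stated formula for $\tau_g$ would come out with the opposite sign. With the labels restored to the paper's convention, the rest of your outline --- the normalisation by $\sqrt{k_1^2+2k_2^2}$, the observation that the unit normal has no $N_1$ component so $\kappa_n$ only picks up the $T$ and $N_2$ parts of the second derivative, and the expansion of the cross products in the Bishop basis --- is exactly the computation the paper carries out.
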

\begin{proof}
	Again, by referring the relations in \ref{transitions}, the tangent vector and the derivative of the tangent vector of $T N_2$ Smarandache curve are given as
	\begin{equation}
	\begin{aligned}
	T_{T{N_2}}=&\frac{-k_2 T + k_1 N_1 + k_2 N_2}{\sqrt{{k_1}^2 + 2 {k_2}^2}},\\
	{T_{T{N_2}}}' =&\omega_1 T+ \omega_2 N_1 + \omega_3 N_2,\\
	\end{aligned}
	\end{equation}
	where
	\[\left[ {\begin{array}{*{20}{c}}
		{{{\rm{\omega }}_1}}\\
		{{{\rm{\omega }}_2}}\\
		{{{\rm{\omega }}_3}}
		\end{array}} \right] = \frac{1}{{{{\left( {2{k_2}^2 + {k_1}^2} \right)}^{\frac{3}{2}}}}}\left[ \begin{array}{l}
	{k_1}{\mkern 1mu} \left( {{k_1}'{k_2} - {k_2}'{k_1}} \right) - 2{k_2}^4 - 3{k_1}^2{k_2}^2 - {k_1}^4\\
	{k_2}\left( {2\left( {{k_1}'{k_2} - {k_2}'{k_1}} \right) - 2{k_2}^2{k_1} - {k_1}^3} \right)\\
	{k_1}\left( {{k_2}'{k_1} - {k_1}'{k_2}} \right) - 2{k_2}^4 - {k_1}^2{k_2}^2
	\end{array} \right].\]
On the other hand the second order derivative of $T N_2$ Smarandache curve is
\[\left(\frac{T + N_2}{\sqrt{2}}\right)''=\frac{{ - \left( {{k_1}^2 + {k_2}^2 + {k_2}'} \right)T + \left( {{k_1}' - {k_1}{k_2}} \right){N_1} + \left( { {k_2}' - {k_2}^2} \right){N_2}} }{\sqrt{2}}.\]
	Lastly, the derivative of the normal vector field of the $T N_2$ Smarandache ruled surface is given as follows:
	\[\left({^{T{N_2}} _{N_1}} n_{\chi} \right)'=\alpha_1 T+ \alpha_2 N_1 + \alpha_3 N_2,\]
	where
\[\left[ {\begin{array}{*{20}{c}}
	{{{\rm{\alpha }}_1}}\\ \\
	{{{\rm{\alpha }}_2}}\\ \\
	{{{\rm{\alpha }}_3}}
	\end{array}} \right] = \frac{1}{{2{{\left( {{k_1}{k_2}v\sqrt 2  + {v^2}{k_1}^2 + {k_2}^2} \right)}^{\frac{3}{2}}}}}\left[ \begin{array}{l}
\, \left( {\sqrt 2 {k_1}v + {k_2}} \right)\left( {{k_1}'{k_2} - {k_1}{k_2}} \right)v...\\
\,\,\,\,\, + {k_1}^2{k_2}{v^2}\left( {3{k_2}\sqrt 2  + 2{k_1}v} \right) + {k_2}^3\left( {{k_2}\sqrt 2  + 4{k_1}v} \right)\\ \\
-{k_1}{k_2}\sqrt 2 \left( {{k_1}{k_2}v\sqrt 2  + {v^2}{k_1}^2 + {k_2}^2} \right)\\ \\
\,{k_2}\left( { - {k_2}\sqrt 2 \left( {{k_1}^2{v^2} + {k_2}^2} \right) - v\left( {2{k_1}{k_2}^2 - {k_1}{k_2} + {k_1}'{k_2}} \right)} \right)
\end{array} \right]\]
	When the above relations substituted into \ref{curvatsTorsi}, the proof is completed. As a result of this theorem, we provide the following two corollaries without the need for proof.
\end{proof}
\begin{corollary}$ $
	\begin{enumerate}[label=(\roman*)]
		\item The $T N_2$ Smarandache curve is asymptotic on $T N_2$ Smarandache ruled surface if $k_2=0$ that is
		$ \displaystyle \theta = \pi k, \; k\in Z $.
		\item The $T N_2$ Smarandache curve is geodesic on $T N_2$ Smarandache ruled surface if $k_1=0$ that is
		$ \displaystyle \theta = \frac{\pi}{2} k, \; k\in Z $.
	\end{enumerate}
\end{corollary}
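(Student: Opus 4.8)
The plan is to read both statements directly off the closed-form expressions for ${^{T{N_2}} _{N_1}} {\kappa_n}$ and ${^{T{N_2}} _{N_1}} {\kappa_g}$ recorded in the preceding theorem, (\ref{tn2Curvats}), combined with the vanishing criteria of Proposition~\ref{propsIlk}: a curve is an asymptotic line of the surface precisely when its normal curvature vanishes, and a geodesic precisely when its geodesic curvature vanishes. Since no new geometric data are needed, the whole argument reduces to a substitution into these two formulas followed by a translation of the resulting curvature condition into a statement about the Bishop angle $\theta$ via (\ref{transCurvats}). This is exactly why the corollary can be stated ``without the need for proof.''

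For part (i) I would impose $k_2\equiv 0$, which also forces $k_2'=0$. Substituting into the numerator of ${^{T{N_2}} _{N_1}} {\kappa_n}$ in (\ref{tn2Curvats}), the term $k_1 v\sqrt2\,(k_2^2-k_2')$ and the term $k_2(k_1^2+2k_2^2)$ both vanish, so ${^{T{N_2}} _{N_1}} {\kappa_n}=0$ and the $T N_2$ Smarandache curve is asymptotic by Proposition~\ref{propsIlk}. For part (ii) I would instead impose $k_1\equiv 0$ (hence $k_1'=0$); then every summand in the numerator of ${^{T{N_2}} _{N_1}} {\kappa_g}$ carries a factor of $k_1$ or $k_1'$ and therefore vanishes, giving ${^{T{N_2}} _{N_1}} {\kappa_g}=0$, so the curve is a geodesic. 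Finally I would convert the curvature conditions into the stated angle conditions using $k_1=\kappa\cos\theta$ and $k_2=\kappa\sin\theta$: for $\kappa\neq 0$, the condition $k_2=0$ is equivalent to $\sin\theta=0$, i.e. $\theta=\pi k$, while $k_1=0$ is equivalent to $\cos\theta=0$, i.e. $\theta=\tfrac{\pi}{2}k$, $k\in\mathbb{Z}$.

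The computation is short, so the only point deserving care, and the closest thing to an obstacle, is checking that the denominators in (\ref{tn2Curvats}) do not degenerate simultaneously, so that each quotient is genuinely zero rather than an indeterminate $0/0$. With $k_2\equiv 0$ the denominator of ${^{T{N_2}} _{N_1}} {\kappa_n}$ collapses to $2\lvert v k_1\rvert$, and with $k_1\equiv 0$ the denominator of ${^{T{N_2}} _{N_1}} {\kappa_g}$ collapses to $4\lvert k_2\rvert^{3}$; both are nonzero away from the excluded degenerate cases $v=0$ and $\kappa=0$. I would also note the implicit use of the identically-zero hypothesis, since $k_2\equiv 0$ (rather than mere pointwise vanishing) is what guarantees $k_2'=0$, and likewise for $k_1$. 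Recording these genericity remarks keeps the substitution rigorous and completes the proof.
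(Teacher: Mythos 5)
Your proposal is correct and follows exactly the route the paper intends: the corollary is stated ``without the need for proof'' precisely because it amounts to substituting $k_2=0$ into the formula for ${^{T{N_2}} _{N_1}}\kappa_n$ and $k_1=0$ into the formula for ${^{T{N_2}} _{N_1}}\kappa_g$ from (\ref{tn2Curvats}) and invoking Proposition~\ref{propsIlk}, which is what you do. Your added checks that the denominators reduce to $2\lvert v k_1\rvert$ and $4\lvert k_2\rvert^3$ respectively (so the quotients are genuinely zero away from degenerate cases) are a small but welcome tightening beyond what the paper records.
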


\begin{theorem}\label{teo3}
	The Gaussian and mean curvature of the ruled surface, ${{^{{N_1} {N_2}} _T}} \chi $ are
	\begin{align*}
	{{{^{{N_1} {N_2}} _T}} \chi}_{K} &=0 , \\
	{{{^{{N_1} {N_2}} _T}} \chi}_H&=\frac{k_1 ' k_2 - k_1 k_2 '}{2v \kappa^3}.
	\end{align*}
\end{theorem}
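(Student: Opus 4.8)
The plan is to read off the six fundamental-form coefficients from (\ref{coeffs}), using the partial derivatives of ${}^{N_1N_2}_T\chi$ displayed above together with its unit normal from (\ref{normals}), and then to feed them into the Gaussian and mean curvature formulas (\ref{curvats}). Throughout I abbreviate the surface and its normal by $\chi$ and $n=(k_2N_1-k_1N_2)/\kappa$.

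First I would dispatch the first fundamental form. Since $\chi_v=T$ is a unit vector, $G=1$, and $F=\langle\chi_s,T\rangle$ retains only the $T$-component of $\chi_s$, giving $F=-\tfrac{\sqrt2}{2}(k_1+k_2)$. Expanding $E=\langle\chi_s,\chi_s\rangle$ and applying $k_1^2+k_2^2=\kappa^2$ yields $E=\tfrac12(k_1+k_2)^2+v^2\kappa^2$, so the denominator collapses to $EG-F^2=v^2\kappa^2$. The decisive simplifications, however, come from the second fundamental form. Since $\chi_{vv}=0$ we get $N=0$ immediately, and the mixed term vanishes as well: $M=\langle\chi_{sv},n\rangle=\tfrac1\kappa\langle k_1N_1+k_2N_2,\,k_2N_1-k_1N_2\rangle=0$ by the orthonormality of the Bishop frame, the two $k_1k_2$ contributions cancelling. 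With $M=N=0$, formula (\ref{curvats}) forces $K=(LN-M^2)/(EG-F^2)=0$ regardless of $L$, which settles the first claim.

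For the mean curvature the same vanishing reduces (\ref{curvats}) to $H=LG/\!\left(2(EG-F^2)\right)=L/(2v^2\kappa^2)$, so everything hinges on $L=\langle\chi_{ss},n\rangle$. This is the only place demanding care: pairing the $N_1,N_2$ components of $\chi_{ss}$ against $(k_2N_1-k_1N_2)/\kappa$ gives $\tfrac1\kappa\big[k_2\big(vk_1'-\tfrac{\sqrt2}{2}k_1(k_1+k_2)\big)-k_1\big(vk_2'-\tfrac{\sqrt2}{2}k_2(k_1+k_2)\big)\big]$, and the two $\tfrac{\sqrt2}{2}k_1k_2(k_1+k_2)$ terms cancel, leaving the clean expression $L=v(k_1'k_2-k_1k_2')/\kappa$. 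Substituting then produces $H=(k_1'k_2-k_1k_2')/(2v\kappa^3)$, exactly as stated. The main obstacle here is purely computational rather than conceptual---the expansion of $L$ and the recognition that the $(k_1+k_2)$ terms annihilate one another; once one observes that $M=N=0$, both curvature formulas fall out almost immediately.
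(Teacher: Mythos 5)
Your computation is correct and is exactly the calculation the paper intends: the paper states Theorem \ref{teo3} without proof, citing only (\ref{coeffs}), (\ref{curvats}) and (\ref{normals}), and your evaluation of $E,F,G,L,M,N$ from the displayed partials of ${^{{N_1}{N_2}}_T}\chi$ (in particular $M=N=0$, $EG-F^2=v^2\kappa^2$, $L=v(k_1'k_2-k_1k_2')/\kappa$) reproduces both stated curvatures. The only implicit convention you share with the paper is taking $\left\lVert v(k_2N_1-k_1N_2)\right\rVert=v\kappa$, i.e.\ $v>0$, in normalizing $n$; otherwise nothing is missing.
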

By theorem \ref{teo3}, and proposition \ref{props}, the following corollaries are obtained.

\begin{corollary}\label{cor3} $ $
	\begin{enumerate}[label=(\roman*)]
		\item The $N_1 N_2$ Smarandache ruled surface is always developable and so has parabolic points,
		\item The $N_1 N_2$ Smarandache ruled surface is minimal iff the curve $\gamma$ is a slant helix.
	\end{enumerate}
\end{corollary}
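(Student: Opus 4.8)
The plan is to read off both statements almost directly from Theorem \ref{teo3}, combined with Proposition \ref{props} for the geometric interpretation and Theorem \ref{teoAl} for the slant-helix characterization; no new computation is needed, since the curvatures have already been recorded in Theorem \ref{teo3}. For part (i), I would simply observe that Theorem \ref{teo3} gives a vanishing Gaussian curvature, $K=0$, identically in $s$ and $v$. By the first item of Proposition \ref{props}, a surface with $K=0$ is developable and has parabolic points, so (i) follows at once.

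For part (ii), I would invoke the third item of Proposition \ref{props}: the surface is minimal exactly when its mean curvature vanishes. Using the expression $H=\dfrac{k_1' k_2 - k_1 k_2'}{2v\kappa^3}$ supplied by Theorem \ref{teo3}, and noting that on the ruled surface one has $v\neq 0$ and that $\kappa=\sqrt{k_1^2+k_2^2}\neq 0$ for a regular base curve, the condition $H=0$ reduces to the vanishing of the numerator, namely $k_1' k_2 - k_1 k_2'=0$.

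The key step is to recognize this Wronskian-type expression as the numerator produced by the quotient rule, $\left(\dfrac{k_1}{k_2}\right)'=\dfrac{k_1' k_2 - k_1 k_2'}{k_2^2}$, so that $k_1' k_2 - k_1 k_2'=0$ holds if and only if $\dfrac{k_1}{k_2}$ is constant. By Theorem \ref{teoAl}, constancy of $\dfrac{k_1}{k_2}$ is precisely the condition for $\gamma$ to be a slant helix, which establishes the equivalence in (ii). I do not expect a genuine obstacle here; the only point requiring care is keeping track of the nondegeneracy assumptions ($v\neq 0$, $\kappa\neq 0$, and $k_2\neq 0$ when dividing) that let one pass freely between $H=0$, the vanishing of the Wronskian, and the constancy of the ratio $k_1/k_2$.
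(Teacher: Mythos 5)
Your proposal is correct and takes essentially the same approach as the paper: both read $K=0$ and the formula for $H$ directly from Theorem~\ref{teo3}, apply Proposition~\ref{props}, and use the quotient-rule identity $k_1'k_2-k_1k_2'=0 \Leftrightarrow k_1/k_2=\text{constant}$ together with Theorem~\ref{teoAl} to characterize minimality by the slant-helix condition. Your explicit tracking of the nondegeneracy assumptions ($v\neq 0$, $\kappa\neq 0$, $k_2\neq 0$) is a minor refinement that the paper leaves implicit.
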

\begin{proof}$ $
	\begin{enumerate}[label=(\roman*)]
		\item The proof is clear by the first item of proposition \ref{props}.
		\item Recalling the Theorem \ref{teoAl}, $\gamma$ is a slant helix iff $\displaystyle \left(\frac{k_1}{k_2}\right)'=0$ that is ${{{^{{N_1} {N_2}} _T}} \chi}_H=0$. Therefore ${{{^{{N_1} {N_2}} _T}} \chi}$ is minimal.\\
		Conversely, if the $N_1 N_2$ Smarandache ruled surface is minimal, namely ${{{^{{N_1} {N_2}} _T}} \chi}_H=0$, then $k_1 ' k_2-k_1 k_2' =0$. Thus, $\displaystyle \frac{k_1}{k_2} = constant$ meaning that $\gamma$ is a slant helix.
	\end{enumerate}
\end{proof}

\begin{theorem}
	The striction curve of the $N_1 N_2$ Smarandache ruled surface is given as
	\[ {^{{N_1}{N_2}} _{T}} \varsigma=\frac{N_1+N_2}{\sqrt{2}}.\]
\end{theorem}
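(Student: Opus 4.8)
The plan is to apply the striction curve formula (\ref{strict}) directly, reading off the base curve and generator from the definition (\ref{defins}) of the $N_1 N_2$ Smarandache ruled surface. Here the base curve is $\gamma = \frac{N_1 + N_2}{\sqrt{2}}$ and the generator (ruling) is $X = T$. The strategy is to compute the two derivatives $\gamma'$ and $X'$ from the Bishop formulae (\ref{derivatChanges}), and then evaluate the scalar coefficient $\frac{\langle \gamma', X' \rangle}{\|X'\|^2}$ that appears in (\ref{strict}).

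First I would differentiate the base curve. Using ${N_1}' = -k_1 T$ and ${N_2}' = -k_2 T$, one obtains
\[
\left(\frac{N_1 + N_2}{\sqrt{2}}\right)' = -\frac{k_1 + k_2}{\sqrt{2}}\, T,
\]
so $\gamma'$ points purely in the $T$ direction. Next I would differentiate the generator: since $T' = k_1 N_1 + k_2 N_2$, the vector $X' = T'$ lies entirely in the plane spanned by $N_1$ and $N_2$.

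The key observation, and the step that makes everything collapse, is that $\gamma'$ and $X'$ are orthogonal because $\{T, N_1, N_2\}$ is an orthonormal frame. Hence
\[
\langle \gamma', X' \rangle = \left\langle -\frac{k_1 + k_2}{\sqrt{2}}\, T,\; k_1 N_1 + k_2 N_2 \right\rangle = 0.
\]
Substituting this into (\ref{strict}) annihilates the correction term, so ${^{{N_1}{N_2}} _{T}} \varsigma = \gamma = \frac{N_1 + N_2}{\sqrt{2}}$, which is precisely the asserted striction curve.

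There is no serious obstacle here. In contrast to the two earlier striction computations, where the inner product equaled $\frac{k_1 k_2}{\sqrt{2}}$ and genuinely shifted the base curve along the ruling, the only thing worth checking in this case is that orthogonality forces the numerator to vanish identically. I would also note that $X' = T'$ is nonzero exactly when $\kappa \neq 0$, which guarantees $\|X'\|^2 \neq 0$ so that (\ref{strict}) is well defined; this is the standard non-degeneracy assumption on the ruling and may be taken for granted here.
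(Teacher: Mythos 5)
Your proposal is correct and follows essentially the same route as the paper: compute $\left(\tfrac{N_1+N_2}{\sqrt{2}}\right)' = -\tfrac{k_1+k_2}{\sqrt{2}}\,T$ and $T' = k_1 N_1 + k_2 N_2$, observe that the inner product vanishes by orthonormality of the Bishop frame, and conclude from (\ref{strict}) that the striction curve coincides with the base curve. Your added remark that $\|T'\|^2 = \kappa^2 \neq 0$ is needed for (\ref{strict}) to be well defined is a small but welcome point the paper leaves implicit.
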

\begin{proof}
The derivative of both the base curve and the generator of $N_1 N_2$ Smarandache ruled surface defined in \ref{defins}, are
\begin{align*}
\left(\frac{N_1+N_2}{\sqrt{2}}\right)'&=-\left(\frac{k_1+k_2}{\sqrt{2}}\right)\, T,\\
T ' &= k_1 N_1 + k_2 N_2.
\end{align*}
By considering the equation (\ref{strict}), the inner product of the left hand side for above relations vanishes, which completes the proof.
\end{proof}
\begin{corollary}$ $
The striction curve of the $N_1 N_2$ Smarandache ruled surface is same as the base curve which is $N_1 N_2$ Smarandache curve of $\gamma$.
\end{corollary}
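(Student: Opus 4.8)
The plan is to read this corollary as an immediate consequence of the striction-curve theorem that precedes it, so the approach is a direct comparison of two already-available expressions rather than a fresh computation. First I would recall the general setup from \eqref{genRuledSurf}: for a ruled surface written as $\chi(s,v)=\gamma(s)+vX(s)$, the base curve is precisely the $v=0$ trace $\gamma(s)$. Applying this to the definition in \eqref{defins}, the base curve of ${^{{N_1}{N_2}}_{T}}\chi$ is $\tfrac{N_1+N_2}{\sqrt{2}}$, which is exactly what the Definition calls the $N_1N_2$ Smarandache curve of $\gamma$.

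Next I would invoke the theorem proved just above, which established that the striction curve is ${^{{N_1}{N_2}}_{T}}\varsigma=\tfrac{N_1+N_2}{\sqrt{2}}$. Placing the two expressions side by side shows that the striction curve and the base curve are literally the same curve, and since the base curve is the $N_1N_2$ Smarandache curve, the claim follows. Thus the corollary is essentially a restatement observation: the two named curves coincide.

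To make the argument self-contained I would also indicate the structural reason behind the coincidence, since it explains why the result holds without appealing only to the bare formula. In the striction formula \eqref{strict}, the base curve is corrected by a multiple of the generator weighted by $\tfrac{\langle\gamma',X'\rangle}{\|X'\|^2}$. For this surface one has, from the preceding proof, $\gamma'=-\tfrac{k_1+k_2}{\sqrt{2}}\,T$ and $X'=T'=k_1N_1+k_2N_2$; because the Bishop frame is orthonormal, $T$ is orthogonal to both $N_1$ and $N_2$, so $\langle\gamma',X'\rangle=0$ and the correction term vanishes identically. Hence the striction curve reduces to the base curve $\tfrac{N_1+N_2}{\sqrt{2}}$.

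I do not anticipate a genuine obstacle here: the entire content is the vanishing of the inner product $\langle\gamma',X'\rangle$, and that was already exhibited in the proof of the preceding theorem. The only care needed is presentational, namely to state clearly that the $v=0$ trace of the parametrization in \eqref{defins} is both the base curve and the $N_1N_2$ Smarandache curve, so that identifying it with the computed striction curve yields exactly the asserted equality.
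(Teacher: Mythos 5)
Your argument is correct and matches the paper's route exactly: the paper states this corollary as an immediate consequence of the preceding theorem, whose proof likewise rests on the vanishing of $\langle \gamma', X'\rangle$ because $\gamma'$ is a multiple of $T$ while $X'=T'=k_1N_1+k_2N_2$ lies in the span of $N_1,N_2$. Your additional remark identifying the $v=0$ trace with the $N_1N_2$ Smarandache curve is just the presentational bookkeeping the paper leaves implicit.
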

\begin{theorem}
	The normal curvature, geodesic curvature and the geodesic torsion of the $N_1 N_2 $ Smarandache ruled surface is
	\begin{equation}\label{tn3Curvats}
	{^{{N_1}{N_2}} _{T}} {\kappa_n} =0, \qquad	{^{{N_1}{N_2}} _{T}} {\kappa_g} =-\kappa, \qquad	{^{{N_1}{N_2}} _{T}} {\tau_g} =0,
	\end{equation}
	respectively.
\end{theorem}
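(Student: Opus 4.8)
\section*{Proof proposal}

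The plan is to feed the intrinsic data of the $N_1 N_2$ Smarandache ruled surface straight into the invariant definitions (\ref{curvatsTorsi}). Write $\beta = \frac{N_1 + N_2}{\sqrt 2}$ for the base curve. From the Bishop formulae (\ref{derivatChanges}) one already has $\beta' = -\frac{k_1+k_2}{\sqrt 2}\,T$ (computed in the preceding striction theorem), and differentiating once more with $T' = k_1 N_1 + k_2 N_2$ gives
\[
\beta'' = -\frac{k_1'+k_2'}{\sqrt 2}\,T - \frac{k_1+k_2}{\sqrt 2}\bigl(k_1 N_1 + k_2 N_2\bigr).
\]
The surface normal is ${^{{N_1} {N_2}} _T} n_{\chi} = \frac{k_2 N_1 - k_1 N_2}{\kappa}$ by (\ref{normals}). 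Since $\beta$ is not unit speed, I normalise its tangent, obtaining $T_\beta = \varepsilon\, T$ with $\varepsilon = -\,\mathrm{sgn}(k_1+k_2)$ locally constant, so that $T_\beta' = \varepsilon\bigl(k_1 N_1 + k_2 N_2\bigr)$.

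Two of the three invariants then collapse by orthogonality. For $\kappa_n = \langle \beta'', {^{{N_1} {N_2}} _T} n_{\chi}\rangle$ the $T$-component of $\beta''$ drops out, and the surviving $N_1, N_2$ part is proportional to $k_1 N_1 + k_2 N_2$, which is orthogonal to $k_2 N_1 - k_1 N_2$; hence $\kappa_n = 0$. For the geodesic torsion I first differentiate the normal: using $N_1' = -k_1 T$ and $N_2' = -k_2 T$, the two tangential contributions $-\frac{k_1 k_2}{\kappa}T$ and $+\frac{k_1 k_2}{\kappa}T$ cancel, so $\bigl({^{{N_1} {N_2}} _T} n_{\chi}\bigr)'$ is a combination of $N_1$ and $N_2$ only. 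Consequently ${^{{N_1} {N_2}} _T} n_{\chi} \times \bigl({^{{N_1} {N_2}} _T} n_{\chi}\bigr)'$ is a multiple of $T$, which is orthogonal to $T_\beta' \in \mathrm{span}\{N_1, N_2\}$, forcing $\tau_g = 0$.

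It remains to evaluate $\kappa_g$. Computing the cross product in the right-handed frame $\{T, N_1, N_2\}$ gives ${^{{N_1} {N_2}} _T} n_{\chi} \times T_\beta = \frac{\varepsilon}{\kappa}\bigl(-k_1 N_1 - k_2 N_2\bigr)$, so that
\[
\kappa_g = \bigl\langle {^{{N_1} {N_2}} _T} n_{\chi} \times T_\beta,\; T_\beta'\bigr\rangle = \frac{\varepsilon^2}{\kappa}\bigl(-(k_1^2 + k_2^2)\bigr) = -\kappa,
\]
using $\varepsilon^2 = 1$ and $\kappa^2 = k_1^2 + k_2^2$ from (\ref{transCurvats}). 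The only delicate point, and the one spot where a careless computation could slip, is the tangent normalisation: $\beta$ has speed $\frac{|k_1+k_2|}{\sqrt 2}$, so $T_\beta$ carries the sign $\varepsilon$. This sign squares out of $\kappa_g$ and never enters $\kappa_n$ or $\tau_g$, so all three values are unambiguous and the stated result follows.
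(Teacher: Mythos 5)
Your proposal is correct and follows essentially the same route as the paper's proof: both compute the unit tangent of the $N_1N_2$ Smarandache curve ($\pm T$), its derivative, the second derivative of the base curve, and the derivative of the surface normal $\frac{k_2N_1-k_1N_2}{\kappa}$, and then substitute into the invariant formulas (\ref{curvatsTorsi}). Your orthogonality shortcuts for $\kappa_n$ and $\tau_g$ and your explicit tracking of the sign $\varepsilon=-\mathrm{sgn}(k_1+k_2)$ in the unit tangent (which the paper silently takes to be $-T$) are a slightly cleaner presentation, but the substance is identical.
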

\begin{proof}
	This time, by recalling both the relations (\ref{transitions}) and (\ref{transCurvats}), the tangent vector and the derivative of the tangent vector of $N_1 N_2$ Smarandache curve are given as
	\begin{equation}
	\begin{aligned}
	T_{{N_1}{N_2}}=&-T,\\
	{T_{{N_1}{N_2}}}' =&-k_1 N_1 - k_2 N_2.\\
	\end{aligned}
	\end{equation}
	Moreover, the second order derivative of $N_1 N_2$ Smarandache curve is
	\[\left(\frac{N_1 + N_2}{\sqrt{2}}\right)''=-\frac{{\left(k_1 ' + k_2 ' \right)T + \left( k_1 ^2+ k_1 k_2 \right){N_1} + \left( k_1 k_2+ k_2 ^2 \right){N_2}} }{\sqrt{2}}.\]
	Lastly, the derivative of the normal vector field of the $T N_2$ Smarandache ruled surface is given as follows:
	\[\left({^{{N_1}{N_2}} _{T}} n_{\chi} \right)'= {\frac{{{k_1}\left( {{k_1}{k_2}' - {k_1}'{k_2}} \right){N_1} + {k_2}\left( {{k_1}{k_2} - {k_1}'{k_2}} \right){N_2}}}{{{\kappa ^3}}}}.\]
	When the above relations substituted into \ref{curvatsTorsi}, the proof is completed.
\end{proof}
\begin{corollary}$ $
	\begin{enumerate}[label=(\roman*)]
		\item The $N_1 N_2$ Smarandache curve is always asymptotic and principal line on $N_1 N_2$ Smarandache ruled surface.
		\item The The geodesic curvature of $N_1 N_2$ Smarandache ruled surface is the negative of the curvature of the initial curve $\gamma$.
	\end{enumerate}
\end{corollary}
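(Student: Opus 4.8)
The plan is to derive both items directly from the theorem immediately preceding this corollary, where the three surface invariants of the $N_1 N_2$ Smarandache ruled surface were found to be ${^{{N_1}{N_2}} _{T}} {\kappa_n} = 0$, ${^{{N_1}{N_2}} _{T}} {\kappa_g} = -\kappa$, and ${^{{N_1}{N_2}} _{T}} {\tau_g} = 0$. Together with the characterizations collected in Proposition \ref{propsIlk}, these computed values settle the corollary with no additional work.

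For item (i) I would apply Proposition \ref{propsIlk} twice. Since the normal curvature ${^{{N_1}{N_2}} _{T}} {\kappa_n}$ vanishes identically, the first bullet of that proposition yields that the base curve (the $N_1 N_2$ Smarandache curve) is an asymptotic line of the surface; since the geodesic torsion ${^{{N_1}{N_2}} _{T}} {\tau_g}$ vanishes identically, the third bullet yields that the same curve is a principal line. Both vanishings hold for every choice of the Bishop curvatures $k_1$ and $k_2$, so the two properties are unconditional, which is exactly the content of the word \emph{always} in the statement.

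For item (ii) I would read off ${^{{N_1}{N_2}} _{T}} {\kappa_g} = -\kappa$ from the preceding theorem and identify $\kappa$ with the curvature of the initial curve $\gamma$. This identification is precisely the third relation in (\ref{transCurvats}), $\kappa = \sqrt{k_1^2 + k_2^2}$, which expresses the Frenet curvature of $\gamma$ through its Bishop curvatures. Hence the geodesic curvature of the surface along its base curve is exactly the negative of $\kappa$, as asserted.

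There is essentially no obstacle here: the corollary is an immediate consequence of the already-computed invariants and the definitions in Proposition \ref{propsIlk}. The only point deserving an explicit word is the identification of the symbol $\kappa$ appearing in ${^{{N_1}{N_2}} _{T}} {\kappa_g}$ with the curvature of $\gamma$, so that item (ii) is understood as a geometric statement about $\gamma$ rather than a mere algebraic rewriting.
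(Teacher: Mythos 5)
Your proposal is correct and follows exactly the route the paper intends: the corollary is stated as an immediate consequence of the preceding theorem's computed invariants ${^{{N_1}{N_2}} _{T}} {\kappa_n}=0$, ${^{{N_1}{N_2}} _{T}} {\kappa_g}=-\kappa$, ${^{{N_1}{N_2}} _{T}} {\tau_g}=0$ combined with Proposition \ref{propsIlk}, which is precisely your argument. Your explicit remark identifying $\kappa$ with the Frenet curvature of $\gamma$ via $\kappa=\sqrt{k_1^2+k_2^2}$ is a small but welcome clarification that the paper leaves implicit.
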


\begin{example}\label{ornek}
Let us consider the standard unit helix curve parameterized as
\[\gamma(s)=\frac{\sqrt{2}}{2}\bigg(cos(s),sin(s),s\bigg), \]
then, the Frenet curvatures of $\gamma$ are $ \displaystyle \kappa=\tau= \frac{\sqrt{2}}{2}$. Since $ \displaystyle \tau=\theta '$, we have $ \displaystyle \theta=\int \tau ds = \frac{s\sqrt{2}}{2}$, then we establish the corresponding curvatures and the vectors of Bishop frame as 
\[k_1=\frac{\sqrt{2}}{2}cos\left(\frac{s\sqrt{2}}{2}\right)\qquad \text{and} \qquad k_2=\frac{\sqrt{2}}{2}sin\left(\frac{s\sqrt{2}}{2}\right),\]
\begin{align*}
T(s)&=\frac{\sqrt{2}}{2}\bigg(-sin(s),cos(s),1\bigg),\\
N_1 (s)&=\left(\begin{array}{l}
\displaystyle -\cos \left( \frac{s \sqrt {2}}{2} \right) \cos
\left( s \right) - \frac{ \sqrt {2}}{2} \sin \left( \frac{s \sqrt {2}}{2} \right) \sin
\left( s \right) ,\\ 
\displaystyle -\cos \left( \frac{s \sqrt {2}}{2} \right) \sin \left( s \right) + \frac{ \sqrt {2}}{2} \sin \left( \frac{s \sqrt {2}}{2} \right) \cos \left( s \right) ,\\ 
\displaystyle \frac{ \sqrt {2}}{2}\sin \left( \frac{s \sqrt {2}}{2} \right),
\end{array}\right),\\
N_2 (s)&=\left(\begin {array}{l}
\displaystyle -\sin \left( \frac{s \sqrt {2}}{2} \right) \cos
\left( s \right) +\frac{ \sqrt {2}}{2}\,\cos \left( \frac{s \sqrt {2}}{2} \right) \sin
\left( s \right),\\
\displaystyle -\sin \left( \frac{s \sqrt {2}}{2} \right) \sin \left( s \right) -\frac{ \sqrt {2}}{2} \,\cos \left( \frac{s \sqrt {2}}{2} \right) \cos \left( s \right) ,\\
\displaystyle \frac{\sqrt {2}}{2} \cos \left(\frac{s \sqrt {2}}{2}\right) \end {array}
\right).\\
\end{align*}
By referring the given definitions in (\ref{defins}) we have  ruled surfaces that are ${^{T N_1} _{N_2} }\chi(s,v) ,$ ${^{T N_2} _{N_1} }\chi(s,v) $ and ${^{N_1 N_2 } _T }\chi(s,v) $ are given by

\begin{align*}
{^{T N_1} _{N_2} } \chi(s,v) &= \left( \begin{array}{l}
\left( { - \frac{{\sin \left( s \right)}}{2} - v\cos \left( s \right)} \right)\sin \left( {\frac{{s\sqrt 2 }}{2}} \right) + \left( { - \frac{{\cos \left( s \right)\sqrt 2 }}{2} + \frac{{v\sin \left( s \right)\sqrt 2 }}{2}} \right)\cos \left( {\frac{{s\sqrt 2 }}{2}} \right) - \frac{{\sin \left( s \right)}}{2},\\
\left( {\frac{{\cos \left( s \right)}}{2} - v\sin \left( s \right)} \right)\sin \left( {\frac{{s\sqrt 2 }}{2}} \right) + \left( { - \frac{{\sin \left( s \right)\sqrt 2 }}{2} - \frac{{v\cos \left( s \right)\sqrt 2 }}{2}} \right)\cos \left( {\frac{{s\sqrt 2 }}{2}} \right) + \frac{{\cos \left( s \right)}}{2},\\
- \frac{1}{2}\sin \left( {\frac{{s\sqrt 2 }}{2}} \right) + \frac{1}{2}v\cos \left( {\frac{{s\sqrt 2 }}{2}} \right)\sqrt 2  + \frac{1}{2} \end{array} \right), \\ \\
{^{T N_2} _{N_1} }\chi(s,v)  &=\left( \begin{array}{l}
\left( { - \frac{{\cos \left( s \right)\sqrt 2 }}{2} - \frac{{v\sin \left( s \right)\sqrt 2 }}{2}} \right)\sin \left( {\frac{{s\sqrt 2 }}{2}} \right) + \left( {\frac{{\sin \left( s \right)}}{2} - v\cos \left( s \right)} \right)\cos \left( {\frac{{s\sqrt 2 }}{2}} \right) - \frac{{\sin \left( s \right)}}{2},\\
\left( { - \frac{{\sin \left( s \right)\sqrt 2 }}{2} + \frac{{v\cos \left( s \right)\sqrt 2 }}{2}} \right)\sin \left( {\frac{{s\sqrt 2 }}{2}} \right) + \left( { - \frac{{\cos \left( s \right)}}{2} - v\sin \left( s \right)} \right)\cos \left( {\frac{{s\sqrt 2 }}{2}} \right) + \frac{{\cos \left( s \right)}}{2},\\
- \frac{{\sqrt 2 }}{2}v\sin \left( {\frac{{s\sqrt 2 }}{2}} \right) + \frac{1}{2}\cos \left( {\frac{{s\sqrt 2 }}{2}} \right) + \frac{1}{2}
\end{array} \right) ,\\ \\
{^{N_1 N_2 } _T }\chi(s,v) &= \left( \begin{array}{l}
\frac{{\sqrt 2 }}{2}\left( {\left( { - \cos \left( s \right) - \frac{{\sin \left( s \right)\sqrt 2 }}{2}} \right)\sin \left( {\frac{{s\sqrt 2 }}{2}} \right) + \left( {\frac{{\sin \left( s \right)\sqrt 2 }}{2} - \cos \left( s \right)} \right)\cos \left( {\frac{{s\sqrt 2 }}{2}} \right) - v\sin \left( s \right)} \right),\\
\frac{{\sqrt 2 }}{2}\left( {\left( { - \sin \left( s \right) + \frac{{\cos \left( s \right)\sqrt 2 }}{2}} \right)\sin \left( {\frac{{s\sqrt 2 }}{2}} \right) + \left( { - \frac{{\cos \left( s \right)\sqrt 2 }}{2} - \sin \left( s \right)} \right)\cos \left( {\frac{{s\sqrt 2 }}{2}} \right) + v\cos \left( s \right)} \right),\\
\frac{1}{2}\left( { - v\sin \left( {\frac{{s\sqrt 2 }}{2}} \right)\sqrt 2  + \cos \left( {\frac{{s\sqrt 2 }}{2}} \right) + 1} \right)
\end{array}\right).
\end{align*}
respectively. For $s\in [-\pi ,\pi]$ and $v\in (-1,1)$, the graphs of these surfaces are provided in Fig. (\ref{f1}).
\begin{figure}[H]
\centering	\hfill
	\subfigure[${^{T N_1} _{N_2} } \chi$]{\includegraphics[width=0.30\textwidth	]{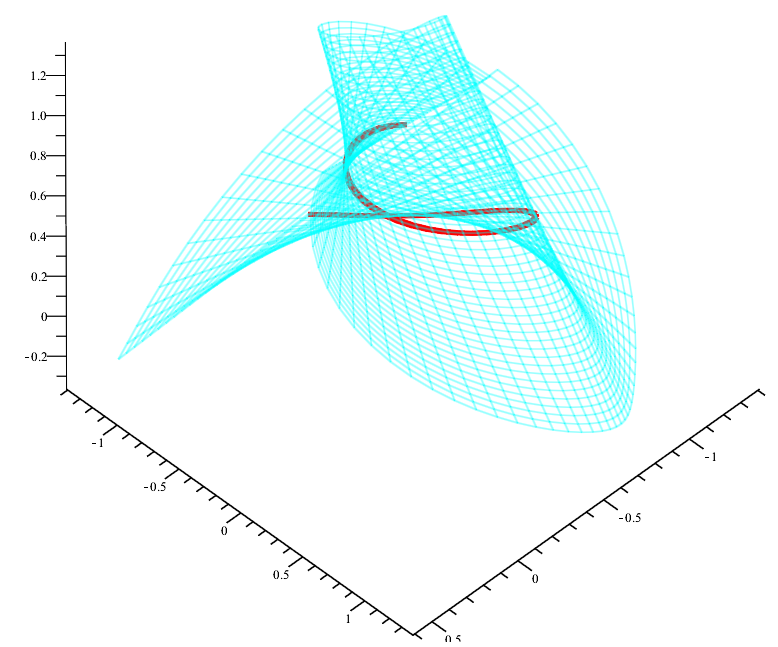}}
	\hfill
	\subfigure[${^{T N_2} _{N_1} } \chi$]{\includegraphics[width=0.30\textwidth	]{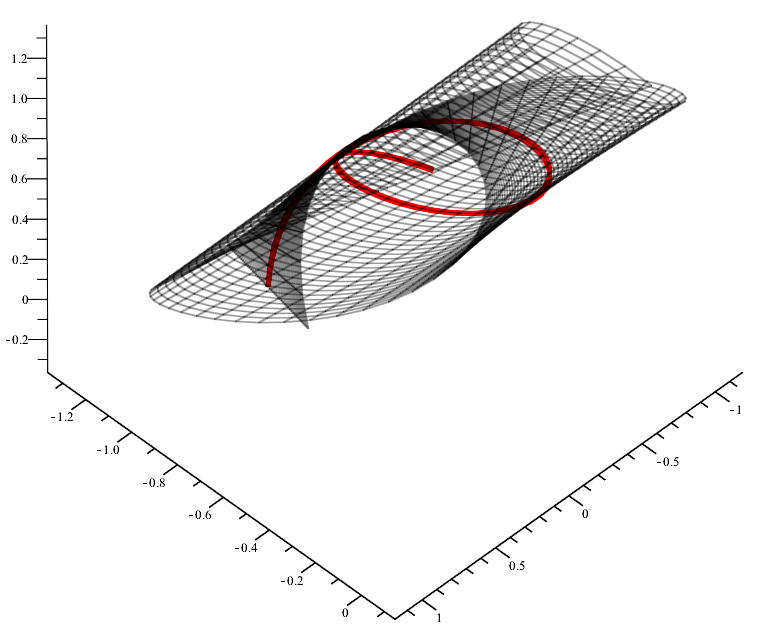}}
	\hfill
	\subfigure[${^{N_1 N_2} _{T} } \chi$]{\includegraphics[width=0.30\textwidth	]{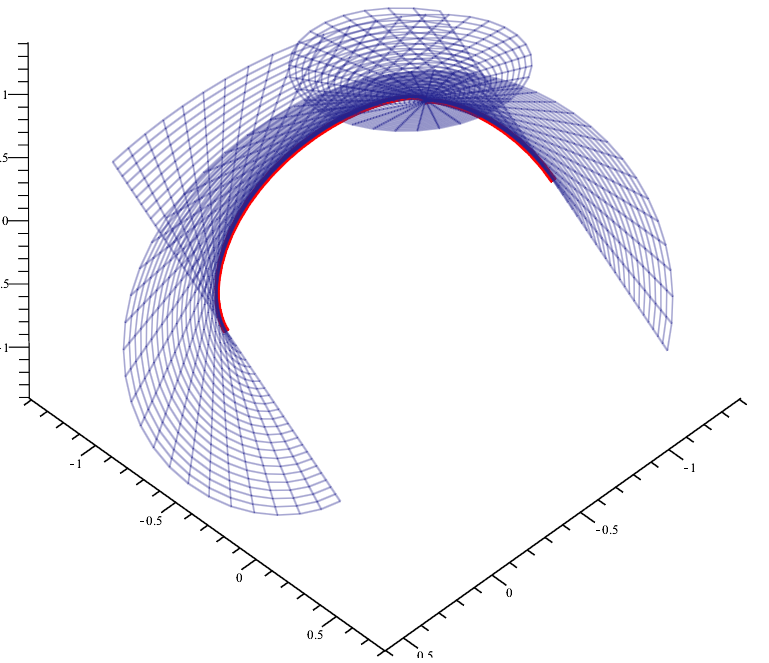}}
	\hfill
	\caption{Smarandache Ruled surfaces from the view of default orientation in Maple} \label{f1}
\end{figure}
\end{example}
The following figures from different aspects are also presented as to view each surface much clearly. The orientations are fixed to the $x$, $y$ and $z$ axis, respectively.
\begin{figure}[H]
	\centering	\hfill
	\subfigure[oriented to $x$ axis]{\includegraphics[width=0.30\textwidth	]{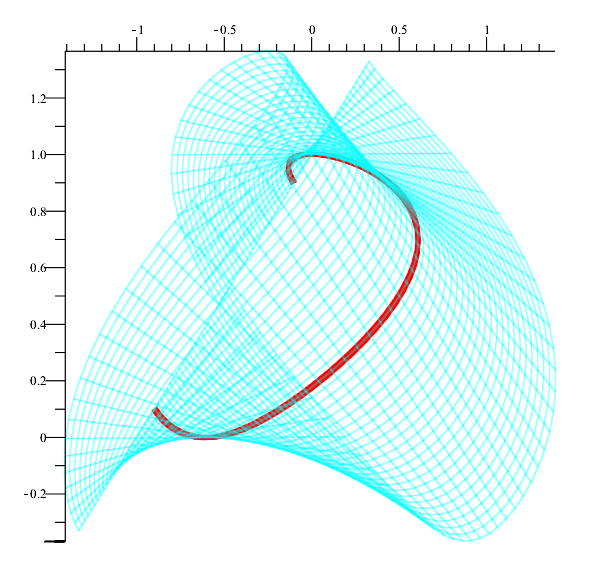}}
	\hfill
	\subfigure[oriented to $y$ axis]{\includegraphics[width=0.30\textwidth	]{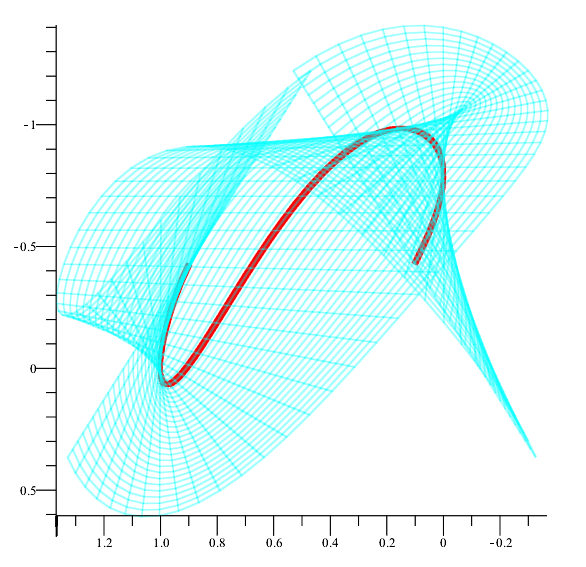}}
	\hfill
	\subfigure[oriented to $z$ axis]{\includegraphics[width=0.30\textwidth	]{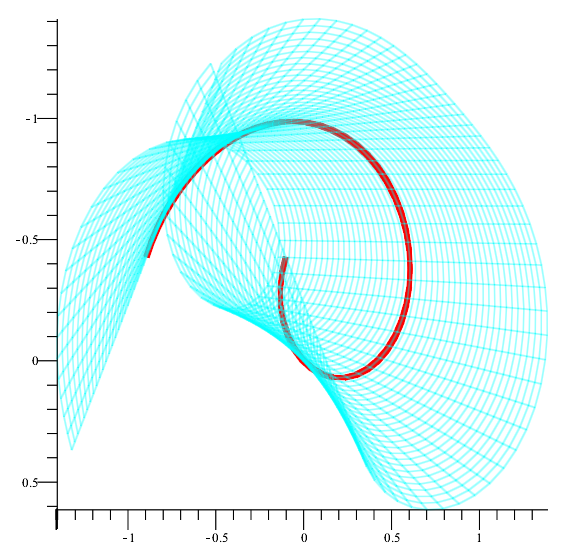}}
	\hfill
	\caption{The ${^{T N_1} _{N_2} } \chi$ Smarandache Ruled surfaces } \label{f2}
\end{figure}
\begin{figure}[H]
	\centering	\hfill
	\subfigure[oriented to $x$ axis]{\includegraphics[width=0.30\textwidth	]{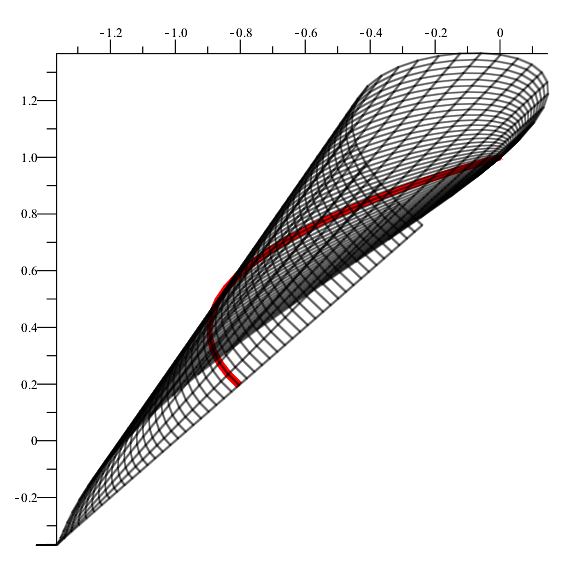}}
	\hfill
	\subfigure[oriented to $y$ axis]{\includegraphics[width=0.30\textwidth	]{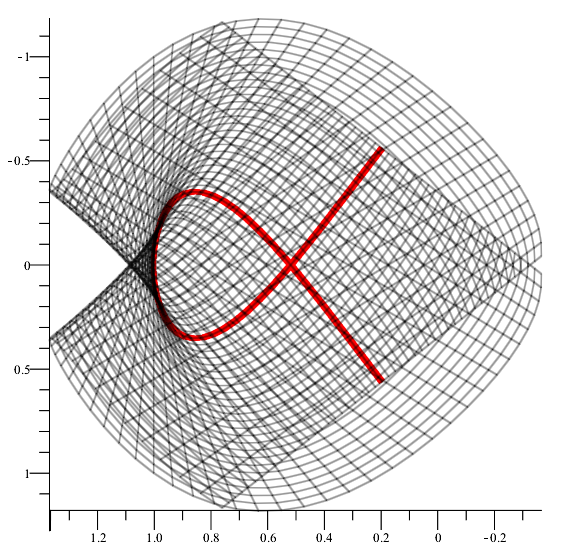}}
	\hfill
	\subfigure[oriented to $z$ axis]{\includegraphics[width=0.30\textwidth	]{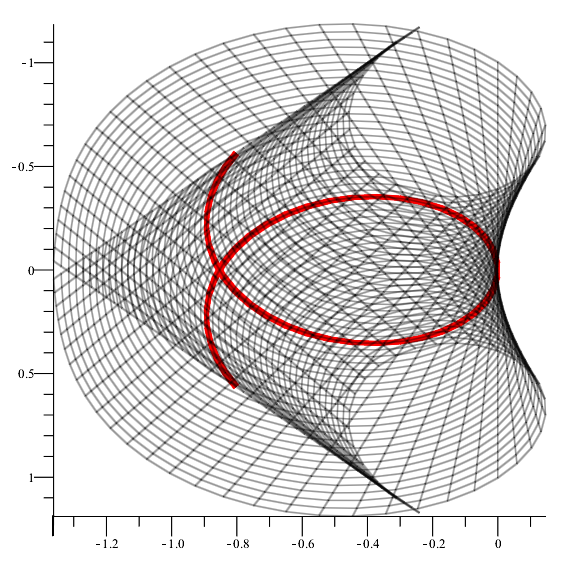}}
	\hfill
	\caption{The ${^{T N_2} _{N_1} } \chi$ Smarandache Ruled surfaces } \label{f3}
\end{figure}
\begin{figure}[H]
	\centering	\hfill
	\subfigure[oriented to $x$ axis]{\includegraphics[width=0.30\textwidth	]{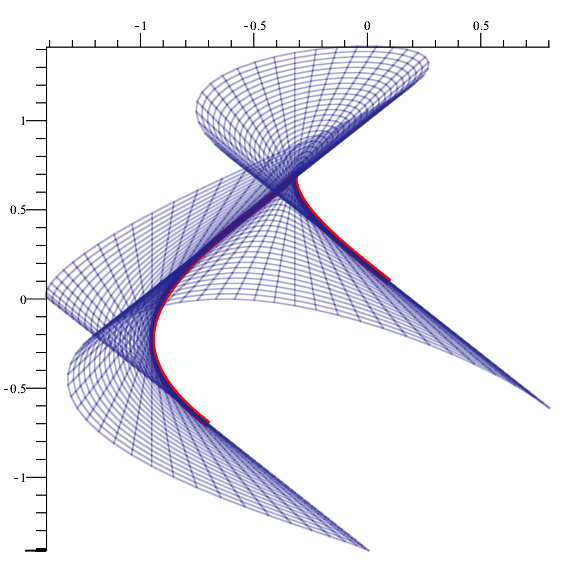}}
	\hfill
	\subfigure[oriented to $y$ axis]{\includegraphics[width=0.30\textwidth	]{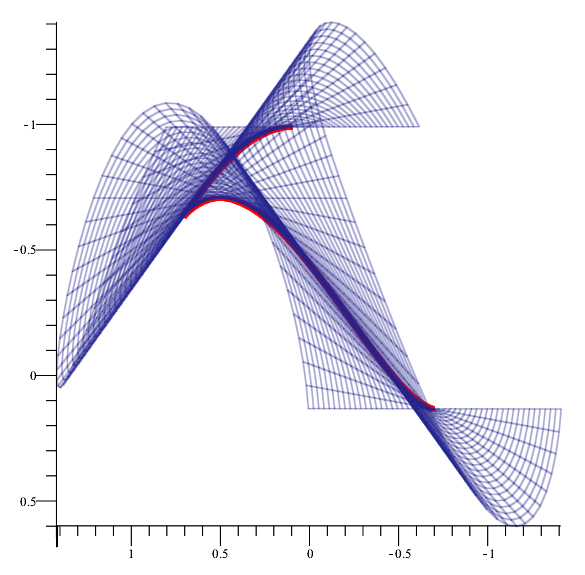}}
	\hfill
	\subfigure[oriented to $z$ axis]{\includegraphics[width=0.30\textwidth	]{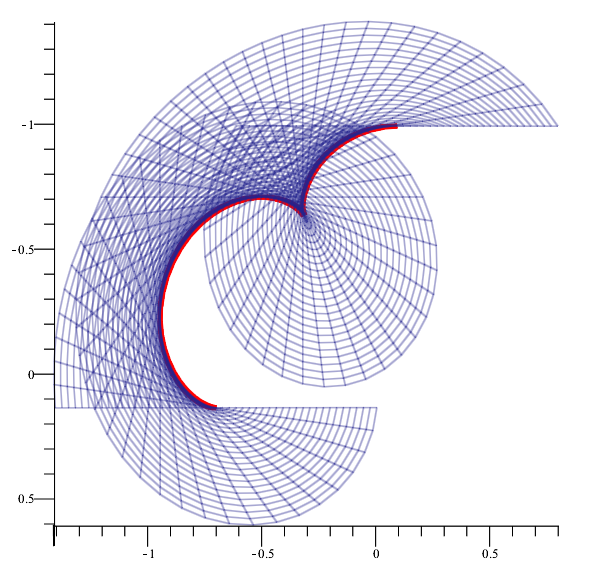}}
	\hfill
	\caption{The ${^{N_1 N_2} _{T} } \chi$ Smarandache Ruled surfaces } \label{f4}
\end{figure}
\section{Conclusion}
Overall, in the paper, three new ruled surfaces based on Smarandache curves according to Bishop frame have been introduced. The characteristics of each surface and of the curves lying on these surfaces have also been drawn. It is seen that apart from choosing the initial curve as planar, slant helix curves have direct effect on characterizing the ruled surfaces generated by Bishop frame.

\end{document}